\newtheorem{thm}{Theorem}[section]
\newtheorem{defn}[thm]{Definition}
\newtheorem{prop}[thm]{Proposition}
\newtheorem{rmq}{Remark}[section]
\newcommand{\eps}{\varepsilon}
\newcommand{\R}{\mathbb{R}}
\newcommand{\Var}{\operatorname{Var}}
\newcommand{\Hess}{\operatorname{Hess}}
\newcommand{\Tr}{\operatorname{Tr}}
\newcommand{\Id}{\operatorname{Id}}
\begin{document}

\title{Bounds on optimal transport maps onto log-concave measures}
\date{\today}

\author[Colombo]{Maria Colombo}
\address{EPFL SB, Station 8, 
CH-1015 Lausanne, Switzerland
}\address{Institute for advanced study, 1 Einstein Dr, Princeton, NJ 08540
}
\email{maria.colombo@epfl.ch}

\author{Max Fathi}
\address{CNRS and Institut de Math\'ematiques de Toulouse\\ Universit\'e de Toulouse, 118 route de Narbonne, 31068, Toulouse
}
\email{max.fathi@math.univ-toulouse.fr}

\begin{abstract} We consider strictly log-concave measures, whose bounds degenerate at infinity. We prove that the optimal transport map from the Gaussian onto such a measure is locally Lipschitz, and that the eigenvalues of its Jacobian have controlled growth at infinity.
\end{abstract}

\maketitle

\section{Introduction and main results}

The goal of this work is to establish \textit{quantitative} regularity bounds on optimal transport maps. The Brenier theorem \cite{Bre91} asserts that given two probability measures on $\R^d$ with density, there exists a convex function $\varphi$ such that $T = \nabla \varphi$ is a transport map sending one measure onto the other. Moreover, it is the optimal map in the Monge-Kantorovitch optimal transport problem with quadratic cost. The qualitative regularity theory has been studied in many works, see \cite{Fig17} and references therein, as well as \cite{CEF18} for the non-compact setting, which is of particular relevance for the present work. 

The Caffarelli contraction theorem \cite{Caf00} states that the Brenier optimal transport map sending the standard Gaussian measure $\gamma$ on $\mathbb{R}^d$ onto a uniformly log-concave measure, that is a measure of the form $e^{-V}dx$ with $\Hess V \geq \alpha \Id$, is $\alpha^{-1/2}$-lipschitz. This result has many applications in probability and analysis, implying for example various functional and isoperimetric inequalities \cite{CEFM04, CFP}, correlation inequalities \cite{CE02}, and bounds on eigenvalues of certain diffusion operators \cite{Mil18}. Alternative proofs have been found \cite{KM12, FGP}, as well as several extensions \cite{CFJ, Kol13}. 

In this work, we are interested in the situation where the target distribution is only log-concave, that is we only assume $\Hess V \geq 0$. In that case, the Brenier map may not be lipschitz anymore. In dimension one, where the map can be explicitly computed via the cumulative distribution functions, it turns out that the derivative of the map has sublinear growth, in a universal way. More precisely, if the target distribution $\mu$ is log-concave and centered, then the Brenier map $T$ sending $\gamma$ onto $\mu$ satisfies
$$|T'(x)| \leq c(1 + |x|)$$
where the constant $c$ only depends on the second moment of $\mu$. This estimate is in general sharp, since that is the precise growth rate of the map onto the symmetrized exponential distribution. We refer to \cite{CFP} for a proof. In this work, we are interested in the situation for higher-dimensional target distributions. Since the class of log-concave measures is invariant by dilations, we need to fix a scaling parameter, so we shall consider isotropic measures, that is probability distributions whose covariance matrix is equal to the identity matrix. 

In a different direction, a result of Klartag and Kolesnikov \cite{KK15} states that eigenvalues of the gradient of the optimal transport map between two log-concave measures concentrate around their average, at a logarithmic scale. That is, if $\lambda_1(x) \leq \lambda_2(x)... \leq \lambda_d(x)$ are the ordered eigenvalues of $\nabla T(x)$, then $\Var(\log \lambda_i) \leq 4$. This provides a dimension-free bound on the fluctuations of the eigenvalues, but does not provide an estimate of their typical size, or pointwise bounds like that of the Caffarelli contraction theorem. 

If we tensorize the one-dimensional result of \cite{CFP}, we can see that for $d$-dimensional isotropic log-concave \textit{product} measures, $||\nabla T||_{op} \leq c(1 + ||x||_{\infty})$. This would be a satisfactory bound, since the right-hand side is almost dimension-free on average (its expectation scales like $\sqrt{\log d}$). However, since the class of isotropic log-concave measures is invariant by rotations, while this estimate is not, we cannot expect it to hold in a pointwise sense for all log-concave isotropic target distributions. 

So instead, one could look for an estimate in $\ell^2$ norm, of the form
\begin{equation} \label{eq_bnd_deriv_conj}
||\nabla T||_{op} \leq c\sqrt{d + |x|^2}.
\end{equation}
where $|x|$ denotes the standard Euclidean 2-norm. The choice of an additive factor $\sqrt{d}$ comes from the fact that when the measure is isotropic, a typical point has its $\ell^2$ norm of order $\sqrt{d}$. Note that compared with the result for product measures, this would behave in a far worse way for typical points. But even though we would prefer a stronger estimate, we do not know how to prove even this one, and only present partial results in that direction anyway.  

Informally, if the derivative grows linearly at infinity, then we would expect the function to grow quadratically. So if \eqref{eq_bnd_deriv_conj} were actually true, we would expect the transport map itself to satisfy a subquadratic estimate. This is the content of the following theorem: 

\begin{thm} \label{thm_estimate_map}
Let $\mu$ be a centered, isotropic, log-concave probability measure on $\R^d$. Then there exists a universal numerical constant $C$ such that the Brenier map sending the standard Gaussian distribution onto $\mu$ satisfies
$$|T(x)| \leq C(d + |x|^2) \qquad \mbox{for every } x\in \R^d.$$
If instead we assume that $\mu$ is centered and satisfies a Gaussian concentration property with constant $\beta$, then 
$$|T(x)| \leq 12\beta^{-1/2}\sqrt{|x|^2 + 17d}.$$ 
\end{thm}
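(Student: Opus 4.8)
The strategy is to combine the monotonicity of $T=\nabla\varphi$ (it is the gradient of a convex function), the pushforward identity $T_\#\gamma=\mu$, and Gaussian concentration of $\gamma$ on \emph{convex} sets (a consequence of Hahn--Banach separation together with the explicit Gaussian measure of half-spaces). The reduction is as follows: fix $x_0$, set $r:=|T(x_0)|$ (we may assume $r>0$), and suppose we can show that for a suitable radius $\rho=\rho(r)$ one has $B_\rho\subseteq\{y:|T(y)|\le r\}$, i.e.\ $|x|\le\rho\Rightarrow|T(x)|\le r$. Then $|x_0|>\rho(|T(x_0)|)$, and inverting the function $\rho(\cdot)$ yields the sought pointwise bound on $|T(x_0)|$. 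So everything comes down to producing such a $\rho$, and the natural candidate is read off from the $\gamma$-measure of $\{|T|\le r\}$.

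First, the pushforward identity gives $\gamma(\{|T|\le r\})=\mu(B_r)$; applying the concentration hypothesis of $\mu$ to the $1$-Lipschitz function $z\mapsto|z|$, whose $\mu$-mean is at most $\sqrt d$ (by isotropy and Jensen, resp.\ by the second-moment bound that Gaussian concentration forces), yields $\mu(B_r)\ge 1-e^{-\beta(r-\sqrt d)^2/2}$ for $r\ge\sqrt d$. Second, if a closed convex set $\Omega\subseteq\R^d$ does not contain $B_\rho$, then separating a point of $B_\rho$ from $\Omega$ exhibits a unit vector $\theta$ with $\Omega\subseteq\{y:\langle y,\theta\rangle\le\rho\}$, whence $\gamma(\Omega)\le\Phi(\rho)$, with $\Phi$ the one-dimensional standard Gaussian distribution function. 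Consequently, \emph{provided $\{|T|\le r\}$ is convex}, we get $B_\rho\subseteq\{|T|\le r\}$ as soon as $1-e^{-\beta(r-\sqrt d)^2/2}>\Phi(\rho)$; using the crude bound $1-\Phi(\rho)\ge e^{-\rho^2}$ (valid once $\rho$ exceeds a universal constant), it suffices to take $\rho=\sqrt{\beta/2}\,(r-\sqrt d)$. Choosing $x_0$ with $\rho=|x_0|$ then forces $|T(x_0)|\le\sqrt d+\sqrt{2/\beta}\,|x_0|$, and after comparing this to a multiple of $\beta^{-1/2}\sqrt{|x_0|^2+d}$ and absorbing the regimes $r\lesssim\sqrt d$ and $\rho\lesssim 1$ (where the estimates above are lossy) into the numerical constants, one reaches $|T(x_0)|\le12\beta^{-1/2}\sqrt{|x_0|^2+17d}$. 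For the first statement one runs the identical scheme, replacing the Gaussian tail of $\mu(|z|\ge r)$ by the weaker bound available for centered isotropic log-concave measures (exponential, or simply $\min(1,d/r^2)$); this degrades $\rho(r)$ and leaves only the quadratic estimate $|T(x_0)|\le C(d+|x_0|^2)$.

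The main obstacle — the one step that genuinely uses more than soft convex analysis — is the convexity of the sublevel set $\{y:|\nabla\varphi(y)|\le r\}$, which is \emph{false} for a general convex $\varphi$. To make the argument go through one must show that log-concavity of $\mu=T_\#\gamma$ forces $|\nabla\varphi|$ to have convex sublevel sets (i.e.\ to be quasi-convex), a statement in the spirit of the Bochner and maximum-principle arguments underlying Caffarelli's contraction theorem and the Klartag--Kolesnikov eigenvalue inequality; alternatively, one can replace $\{|T|\le r\}$ by the genuinely convex sublevel sets $\{\varphi\le c\}$ and pay the price of an auxiliary estimate controlling $\varphi$, equivalently $T$, near the origin — this is the point where the normalization of $\mu$ is really used. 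In the setting of the second statement one works directly with the superlevel sets $\{y:\langle T(y),\theta\rangle\ge c\}$, which by convexity of $\varphi$ along lines in direction $\theta$ are unions of half-lines parallel to $\theta$, and extracts the separation inequality from this one-dimensional monotonicity. A secondary, purely bookkeeping obstacle is to track all of the above carefully enough to land on the clean constants $12$ and $17d$.
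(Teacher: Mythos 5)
Your reduction hinges on the claim that a convex set with large Gaussian measure must contain a large centered ball (which is correct, via half-space separation), applied to the set $\Omega := T^{-1}(B_r)=\{y:|T(y)|\le r\}$. But, as you yourself flag, this set is \emph{not} convex for a general convex potential $\varphi$: $T=\nabla\varphi$ is monotone, not affine, and preimages of balls under monotone maps are generally non-convex. This is not a gap you can wave away; it is the load-bearing step, and the proposed repairs do not close it. Quasi-convexity of $|\nabla\varphi|$ under log-concavity of the target is not a consequence of the Caffarelli or Klartag--Kolesnikov machinery (those give $L^\infty$ or $L^2$-type control on eigenvalues of $\nabla^2\varphi$, nothing about sublevel sets of $|\nabla\varphi|$). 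The fallback via superlevel sets $A_\theta(c)=\{y:\langle T(y),\theta\rangle\ge c\}$ also fails as stated: while $A_\theta(c)$ is indeed an up-set in direction $\theta$ (because $t\mapsto\partial_\theta\varphi(y+t\theta)$ is nondecreasing), an up-set of small Gaussian measure can still contain points arbitrarily close to the origin (e.g.\ a thin half-infinite slab), so no half-space separation and hence no lower bound on $|x_0|$ follows.

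The paper's proof avoids the convexity issue entirely, and its mechanism is genuinely different from what you propose. Fixing $x$, it does not look at $T^{-1}(B_r)$ at all; instead it sets $u=(T(x)-x)/|T(x)-x|$ and considers the \emph{explicit} ball $B=B(x+4\sqrt d\,u,\,2\sqrt d)$. The cyclical monotonicity inequality $\langle T(y)-T(x),y-x\rangle\ge 0$ for $y\in B$ forces $T(B)$ into a cone of aperture controlled by the ratio $2\sqrt d/4\sqrt d=1/2$, pointing in direction $u$ from $T(x)$; quantitatively $T(B)\subseteq\{f\ge 0\}$ for the $3/2$-Lipschitz function $f(z)=\langle z-T(x),u\rangle+|z-T(x)|/2$, whose $\mu$-mean is at most $-|T(x)|/8$ once $|T(x)|\gtrsim 1+|x|^2$ and $|T(x)|\gtrsim\sqrt d$. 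Lee--Vempala concentration for $\mu$ then bounds $\mu(T(B))$ by $\exp(-c|T(x)|)$, while a direct Jensen computation lower-bounds $\gamma(B)\ge e^{-|x|^2-17d}$. Comparing $\gamma(B)\le\mu(T(B))$ closes the argument. In other words, the argument "thickens" the half-line you wanted to use into a ball of radius $\sqrt d$ placed a specific distance from $x$ in the direction of displacement, and replaces your Hahn--Banach step by the cone estimate coming from monotonicity. That is precisely the missing ingredient your sketch does not supply.
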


The first part of this theorem actually holds for much more general probability measures, that satisfy a certain tail estimate we shall describe in the next section. About the second part, see Definition \ref{def_conc} for a definition of Gaussian concentration, which is a property weaker than uniform log-concavity, and does not require log-concavity, but which does not hold for all log-concave distributions. 

The typical order of magnitude of the first bound is quite off, since on average the left hand side scales like $d^{1/2}$, while the right hand side scales like $d$. The second estimate has the correct scaling for typical points and high-dimensional measures.

We shall use this bound to then prove a priori regularity estimates on derivatives of $T$. The first strategy will be to revisit Kolesnikov's proof of Sobolev estimates in the uniformly log-concave case. The main difference will be that we can allow for non-uniform lower bounds on the Hessian of the potential. In this direction, we obtain the following bound: 

\begin{thm} \label{thm_est_reg}
Let $T = \nabla \varphi$ be the Brenier map sending the standard Gaussian measure onto $\mu = e^{-V}dx$. Assume that $\mu$ is centered, isotropic, and that for all $x \in \R^d$
$$c_{1}\Id \geq \Hess V(x) \geq \frac{c_2}{d + |x|}\Id,$$
with $c_{1}, c_2%, \beta
 > 0$. Then there exists a universal constant $C>0$ such that
$$\left|\left|\frac{\partial^2_{ee}\varphi}{\sqrt{d + |x|^2}}\right|\right|_{p+2, \gamma} \leq \frac{C}{c_2}\left(1 + p\frac{\sqrt{c_{1}}}{4\sqrt{d}}\right).$$
\end{thm}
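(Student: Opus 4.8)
The plan is to adapt Kolesnikov's integration-by-parts scheme for Sobolev bounds on Brenier maps (as in \cite{Kol13}) to the present degenerate situation. The two new ingredients are an $x$-dependent weight matched to the lower bound on $\Hess V$, and the a priori control on the size of the transport map furnished by Theorem~\ref{thm_estimate_map}; a generalized Caffarelli contraction estimate will be used to control the inverse Hessian. Fix a unit vector $e$ and set $u := \partial^2_{ee}\varphi = \langle\Hess\varphi\,e,e\rangle\geq 0$. Writing the Monge--Amp\`ere equation for $T=\nabla\varphi$ in the logarithmic form $V(\nabla\varphi(x)) = \log\det\Hess\varphi(x) + \tfrac12|x|^2 + \tfrac d2\log(2\pi)$ and differentiating twice in the direction $e$ (using $\nabla(\partial_e\varphi)=\Hess\varphi\,e$ and the formula for the derivative of $\log\det$) gives the pointwise identity
\[
\langle\Hess V(\nabla\varphi)\,\Hess\varphi\,e,\,\Hess\varphi\,e\rangle + \langle\nabla V(\nabla\varphi),\nabla u\rangle + \Tr\!\big(((\Hess\varphi)^{-1}\Hess(\partial_e\varphi))^2\big) = \Tr\!\big((\Hess\varphi)^{-1}\Hess u\big) + 1 ,
\]
where the third term on the left is nonnegative.

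\emph{Weighted integration by parts.} Let $p\geq 0$, put $w(x):=(d+|x|^2)^{-p/2}$, multiply the identity by $u^{p}w$ and integrate against $\gamma$. The term $\int\Tr((\Hess\varphi)^{-1}\Hess u)\,u^{p}w\,d\gamma$ is integrated by parts twice, using that the rows of the cofactor matrix of $\Hess\varphi$ are divergence free and that $e^{-V(\nabla\varphi)}\det\Hess\varphi$ is the Gaussian density; crucially this reproduces the term $\int\langle\nabla V(\nabla\varphi),\nabla u\rangle\,u^{p}w\,d\gamma$, which cancels the first-order term in the identity. Dropping the nonnegative $\Tr((\cdots)^2)$ contribution, one is left with
\[
\int\langle\Hess V(\nabla\varphi)\Hess\varphi e,\Hess\varphi e\rangle\,u^{p}w\,d\gamma + p\!\int\! u^{p-1}w\,\langle(\Hess\varphi)^{-1}\nabla u,\nabla u\rangle\,d\gamma \;\leq\; \int u^{p}w\,d\gamma \;-\; \int u^{p}\langle(\Hess\varphi)^{-1}\nabla w,\nabla u\rangle\,d\gamma .
\]
When $p=0$ the weight is constant and the last integral is absent, giving the base case at once.

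\emph{Absorption and closing.} For $p>0$, estimate the last integral by Cauchy--Schwarz for the positive form $(\Hess\varphi)^{-1}$ followed by Young's inequality, using half of the second term on the left to absorb $\langle(\Hess\varphi)^{-1}\nabla u,\nabla u\rangle$; the remainder is bounded by a multiple of $\int u^{p+1}(d+|x|^2)^{p/2}\langle(\Hess\varphi)^{-1}\nabla w,\nabla w\rangle\,d\gamma$. Here I invoke the generalized Caffarelli bound \cite{KM12}: since $\mu=e^{-V}dx$ has $\Hess V\leq c_1\Id$ and the Brenier map $\nabla\varphi^{*}$ from $\mu$ onto the Gaussian (whose potential has Hessian $\Id$) is then $\sqrt{c_1}$-Lipschitz, one has $\|(\Hess\varphi(x))^{-1}\|_{op}=\|\Hess\varphi^{*}(\nabla\varphi(x))\|_{op}\leq\sqrt{c_1}$; combined with $|\nabla w|\leq p(d+|x|^2)^{-(p+1)/2}$ and $(d+|x|^2)^{-1/2}\leq d^{-1/2}$ this controls the remainder by $\tfrac{p\sqrt{c_1}}{2\sqrt d}\int u^{p+1}(d+|x|^2)^{-(p+1)/2}d\gamma$. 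For the main term, the lower bound gives $\langle\Hess V(\nabla\varphi)\Hess\varphi e,\Hess\varphi e\rangle\geq\frac{c_2}{d+|\nabla\varphi|}|\Hess\varphi e|^2\geq\frac{c_2}{d+|\nabla\varphi|}u^{2}$, and Theorem~\ref{thm_estimate_map} gives $d+|\nabla\varphi(x)|\leq (C_0+1)(d+|x|^{2})$, whence that term is $\geq\frac{c_2}{C_0+1}\int u^{p+2}(d+|x|^{2})^{-(p+2)/2}d\gamma$. Setting $f:=u/\sqrt{d+|x|^2}$, these bounds combine to
\[
\frac{c_2}{C_1}\,\|f\|_{p+2,\gamma}^{p+2} \;\leq\; \|f\|_{p,\gamma}^{p} + \frac{p\sqrt{c_1}}{2\sqrt d}\,\|f\|_{p+1,\gamma}^{p+1}
\]
for a universal $C_1$. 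Since $\gamma$ is a probability measure, $\|f\|_{p,\gamma}\leq\|f\|_{p+1,\gamma}\leq\|f\|_{p+2,\gamma}=:N$, so this reduces to the quadratic inequality $\frac{c_2}{C_1}N^{2}\leq 1+\frac{p\sqrt{c_1}}{2\sqrt d}N$, which solved for $N$ yields the asserted bound after adjusting the numerical constant.

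\emph{Main obstacle.} The real work is not in the computation above but in justifying the two integrations by parts with no boundary terms, and the a priori finiteness of every integral appearing, in a non-compact setting where the Hessian bounds degenerate. I would do this by approximation: replace $\mu$ by a sequence of compactly supported, uniformly log-concave measures converging to it, for which the potentials and transport maps are smooth with enough decay (by the qualitative regularity theory of \cite{CEF18} and Caffarelli's theorem \cite{Caf00}), run the argument there where all manipulations are legitimate, and pass to the limit using the uniform bound of Theorem~\ref{thm_estimate_map} and lower semicontinuity of the $L^{p+2}(\gamma)$-norm. Arranging the approximation so that it respects the hypotheses $c_1\Id\geq\Hess V\geq\frac{c_2}{d+|x|}\Id$ and the centering/isotropy normalization, uniformly along the sequence, is the delicate point.
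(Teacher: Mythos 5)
Your argument is correct and ultimately produces the same weighted $L^{p+2}$ estimate, but it follows a somewhat different computational route to the same polynomial inequality, so a short comparison is worthwhile. The paper works with the discretized second difference $\partial^{\eps}_{ee}\varphi$, lower-bounds the convexity increment of $V$ along the transport, multiplies by $(\partial^{\eps}_{ee}\varphi)^p/(d+|x|^2)^{p/2}$, and performs the integration by parts after the change of variables $x\mapsto T^{-1}(x)$, invoking Lemma 6.3 of \cite{Kol11}; the nonnegative quantity dropped is the discrete Bregman divergence $\Tr(A)-d-\log\det A$ with $A=\nabla T(x+\eps e)\nabla T(x)^{-1}$, and only afterwards is $\eps\to 0$ taken. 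You instead differentiate the logarithmic Monge--Amp\`ere equation twice pointwise, obtain the identity
\[
\langle\Hess V(\nabla\varphi)\Hess\varphi e,\Hess\varphi e\rangle + \langle\nabla V(\nabla\varphi),\nabla u\rangle + \Tr\big(((\Hess\varphi)^{-1}\Hess(\partial_e\varphi))^2\big) = \Tr\big((\Hess\varphi)^{-1}\Hess u\big)+1
\]
with $u=\partial^2_{ee}\varphi$, and integrate by parts in $\gamma$-coordinates using the divergence-free rows of the cofactor matrix together with $e^{-V(\nabla\varphi)}\det\Hess\varphi=\gamma$; this makes the cancellation of the $\langle\nabla V(\nabla\varphi),\nabla u\rangle$ term explicit rather than hidden inside the change of variables, and the dropped nonnegative term is the infinitesimal version $\Tr(((\Hess\varphi)^{-1}\Hess\partial_e\varphi)^2)\geq 0$ of the paper's Bregman term. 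After that, both proofs use the same three ingredients in the same way: $\|(\Hess\varphi)^{-1}\|_{op}\leq\sqrt{c_1}$ via Caffarelli for the inverse map, $|\nabla\varphi(x)|\lesssim d+|x|^2$ from Theorem~\ref{thm_estimate_map}, and Young's inequality to absorb the $\nabla w$ term, reducing to the quadratic inequality $\tfrac{c_2}{C}N^2\leq 1+\tfrac{p\sqrt{c_1}}{C'\sqrt d}N$ in $N=\|u/\sqrt{d+|x|^2}\|_{p+2,\gamma}$. What your route buys is transparency (the structure of the cancellation is manifest); what the paper's route buys is that the $\eps$-discretization keeps the computation at the level of first-order quantities of $T$ for as long as possible, which somewhat lessens the regularity needed to justify each step. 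You are right that the a priori finiteness and boundary-term issues are the genuine delicacy; your proposed regularization argument is reasonable but would need to be carried out with the same care the paper devotes to its regularization in the proof of Theorem~\ref{thm_est_reg2}, and notice that the paper itself leaves the passage $\eps\to 0$ at a comparable level of informality.
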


We remark that a growth of $L^p$ norms slower than $O(p)$ also implies that the function has some finite exponential moment. So we would have a pointwise bound of the form  $\partial_{ee}^2\varphi(x) \leq Cr\sqrt{d + |x|^2}$ on the complement of a set of mass smaller than $e^{-C'r}$ for every $r\geq 1$. 

 In this result, as the dimension grows large, the influence of the upper bound $c_1$ vanishes. This matches well with the intuition that only lower bounds on $\Hess V$ matter to get regularity estimates on $\nabla T$, and it may well be that the assumption of an upper bound is a technical artifact of our method of proof. 

\begin{rmq}
Since $|x|$ typically behaves like $d^{1/2}$, it would be more natural to assume a lower bound of the form $\Hess V(x) \geq \beta(d^{1/2} + |x|)^{-1}\Id$. However, our assumption is weaker, and assuming this stronger estimate would not improve the end result in our proof.  
\end{rmq}

Finally, in another direction we use the bound of Theorem~\ref{thm_estimate_map} and an argument in the spirit of Caffarelli contraction theorem \cite{Caf00} to obtain a bound on the growth of the eigenvalues in $L^\infty$, but this time the growth in $|x|$ is not optimal.

\begin{thm} \label{thm_est_reg2}
Let $T = \nabla \varphi$ be the Brenier map sending the standard Gaussian measure onto the probability measure $\mu = e^{-V}dx$. Assume that $\mu$ is centered, isotropic, and that for all $x \in \R^d$
\begin{equation}
\label{eqn:hp-teo3}
c_{1}\Id \geq \Hess V(x) \geq \frac{c_2}{d + |x|}\Id,
\end{equation}
with $c_{1}\geq1$, $c_2%, \beta
 > 0$. Then there exists a universal constant $C>0$ such that
\begin{equation} \label{eq_bnd_deriv-bad-first_final}
||\nabla T(x)||_{op} \leq \max\left(C\frac{c_1^2 }{c_2^2}, 1 \right)(d + |x|^2)^2.
\end{equation}
If moreover we assume that 
\begin{equation}
\label{eqn:ass-DV}c_{1} \geq |DV(x)|,
\end{equation}
then
\begin{equation} \label{eq_bnd_deriv-bad_final}
||\nabla T(x)||_{op} \leq \max\left(C\frac{c_1^2 }{c_2^2}, 1 \right)(d^{4/3} + |x|^2). 
\end{equation}
\end{thm}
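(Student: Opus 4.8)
\textit{Proof strategy.} After the usual approximation (regularising $\mu$, as in \cite{CEF18}) it is enough to prove the bound assuming $\varphi$ is smooth and strictly convex, so that $T=\nabla\varphi$ is a diffeomorphism of $\R^d$ and $\|\nabla T(x)\|_{op}=\sup_{|e|=1}\partial_{ee}\varphi(x)$. Write the Monge--Amp\`ere equation in logarithmic form as
\[
\tfrac12|x|^2+\mathrm{cst}=V(\nabla\varphi(x))-\log\det D^2\varphi(x).
\]
Theorem~\ref{thm_estimate_map} gives $|\nabla\varphi(x)|=|T(x)|\le C_0(d+|x|^2)$, and feeding this into \eqref{eqn:hp-teo3} turns the degenerate lower bound on $\Hess V$ into one that is uniform \emph{along the map}: $\Hess V(\nabla\varphi(x))\ge c_2'(d+|x|^2)^{-1}\Id$ with $c_2'=c_2/(1+C_0)$. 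Moreover $\int\nabla V\,d\mu=0$ together with $\Hess V\le c_1\Id$ and isotropy forces $|\nabla V(0)|\le c_1\sqrt d$, hence $|\nabla V(\nabla\varphi(x))|\le c_1\sqrt d+c_1|T(x)|\le C''c_1(d+|x|^2)$ in general, while under the extra assumption \eqref{eqn:ass-DV} one has directly $|\nabla V(\nabla\varphi(x))|\le c_1$.

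Fix a unit vector $e$ and set $w=\partial_{ee}\varphi>0$. Differentiating the Monge--Amp\`ere equation twice in the direction $e$ and manipulating $\partial_{ee}\log\det D^2\varphi$ in the usual way gives the pointwise identity $\mathcal Lw=\langle\Hess V(\nabla\varphi)\nabla\partial_e\varphi,\nabla\partial_e\varphi\rangle+\mathcal Q-1$, where $\mathcal Lf:=\Tr((D^2\varphi)^{-1}D^2f)-\langle\nabla V(\nabla\varphi),\nabla f\rangle$ is uniformly elliptic (its second-order coefficients $(D^2\varphi)^{-1}$ being positive definite) and $\mathcal Q\ge0$ is a Hilbert--Schmidt term. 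Using the Cauchy--Schwarz inequalities $|\nabla\partial_e\varphi|^2=|D^2\varphi\,e|^2\ge\langle D^2\varphi\,e,e\rangle^2=w^2$ and $\langle(D^2\varphi)^{-1}\nabla w,\nabla w\rangle\le\mathcal Q\,w$ (recall $\nabla w=(D^2\partial_e\varphi)e$), together with the lower bound on $\Hess V$, one gets everywhere
\[
\mathcal L(\log w)=\frac{\mathcal Lw}{w}-\frac{\langle(D^2\varphi)^{-1}\nabla w,\nabla w\rangle}{w^2}\ \ge\ \frac{c_2'\,w}{d+|x|^2}-\frac1w.
\]

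The plan is then a maximum-principle argument. Let $\Phi(x)=2\log(d+|x|^2)$, add a term $\eps|x|^2$ for compactness, and let $x_0$ maximise $\log w-\Phi-\eps|x|^2$, choosing $e$ to be the top eigenvector of $D^2\varphi(x_0)$ so that $w(x_0)=\|\nabla T(x_0)\|_{op}$. At $x_0$ one has $\mathcal L(\log w)(x_0)\le\mathcal L\Phi(x_0)+2\eps\,\mathcal L(\tfrac12|x|^2)(x_0)$; combining with the differential inequality and letting $\eps\to0$ gives (when $\|\nabla T(x_0)\|_{op}>1$, the complementary case being trivial) $\|\nabla T(x_0)\|_{op}\le c_2'^{-1}(d+|x_0|^2)(\mathcal L\Phi(x_0)+1)$. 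Since $x_0$ maximises $\log w-\Phi$, the bound propagates to every $x$: $\|\nabla T(x)\|_{op}\le e^{\Phi(x)-\Phi(x_0)}\|\nabla T(x_0)\|_{op}=\big(\tfrac{d+|x|^2}{d+|x_0|^2}\big)^2\|\nabla T(x_0)\|_{op}$. Everything thus reduces to the estimate $\mathcal L\Phi(x_0)\lesssim(c_1^2/c_2^2)(d+|x_0|^2)$, which yields the first claim; the improved bound comes from replacing $\Phi$ by $\log(d^{4/3}+|x|^2)$ and redistributing, via Young's inequality, the gain afforded by \eqref{eqn:ass-DV}.

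The main obstacle is this last estimate on the zeroth-order term $\mathcal L\Phi(x_0)$. Writing $\rho=d+|x|^2$ one computes $\mathcal L\Phi=\tfrac4\rho\Tr((D^2\varphi)^{-1})-\tfrac8{\rho^2}\langle(D^2\varphi)^{-1}x,x\rangle-\tfrac4\rho\langle\nabla V(\nabla\varphi),x\rangle\le\tfrac4\rho\big(\Tr((D^2\varphi)^{-1})-\langle\nabla V(\nabla\varphi),x\rangle\big)$, and the difficulty is that $(D^2\varphi)^{-1}=\nabla(T^{-1})\circ T$ is a priori unbounded. The key algebraic tool is the once-differentiated Monge--Amp\`ere identity $\nabla\log\det D^2\varphi(x)=D^2\varphi(x)\nabla V(\nabla\varphi(x))-x$, which lets one write $\langle\nabla V(\nabla\varphi),x\rangle=\langle(D^2\varphi)^{-1}x,x\rangle+\langle(D^2\varphi)^{-1}\nabla\log\det D^2\varphi,x\rangle$ and so bound $\mathcal L\Phi(x_0)$ in terms of $\|(D^2\varphi(x_0))^{-1}\|_{op}$, $\|\nabla T(x_0)\|_{op}$, $|\nabla V(\nabla\varphi(x_0))|\lesssim c_1\rho_0$ (or $\le c_1$), and $|x_0|\le\sqrt{\rho_0}$. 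To break the resulting circularity one first needs a crude a priori polynomial bound on $\|(D^2\varphi)^{-1}\|_{op}$ — extracted from $\det D^2\varphi(x)=(2\pi)^{-d/2}e^{V(\nabla\varphi(x))-|x|^2/2}$, the bound on $|\nabla\varphi|$, and the standard estimates on the density of an isotropic log-concave measure — which is then bootstrapped through the inequality above. I expect the genuinely delicate points to be (i) this control of $\|(D^2\varphi)^{-1}\|_{op}$ together with the $\eps$-regularisation (this is where optimality in $|x|$ is lost and the factor $c_1^2/c_2^2$ arises), and (ii) the bookkeeping in the Young-type optimisation producing precisely the exponents $2$ and $4/3$.
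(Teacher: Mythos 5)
Your outline is, up to replacing incremental quotients by genuine third derivatives, the same argument as the paper's: regularize so that Caffarelli gives a crude global Lipschitz bound (hence the weighted maximum is attained), maximize $\partial_{ee}\varphi$ against a weight $(d^s+|x|^2)^{q/2}$, use the first- and second-order conditions at the maximum together with the concavity of $\log\det$, localize the degenerate lower bound on $\Hess V$ along the image of $T$ via Theorem~\ref{thm_estimate_map}, control $\nabla V\circ T$ either by \eqref{eqn:ass-DV} or by $|\nabla V(y)|\le c_1(\sqrt d+|y|)$ (which the paper derives exactly as you do, from $\int\nabla V\,d\mu=0$ and the Lipschitz bound on $\nabla V$), and finish with Young's inequality and the choices $(q,s)=(4,1)$ and $(2,4/3)$, which correspond precisely to your two weights $\Phi$. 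The differential inequality $\mathcal L(\log w)\ge c_2'w/(d+|x|^2)-1/w$ and the Cauchy--Schwarz manipulations you use to get it are correct.

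The one genuine gap is the step you yourself flag as the main obstacle: the bound on $\Tr\bigl((D^2\varphi)^{-1}\bigr)$ in $\mathcal L\Phi(x_0)$. The route you propose --- extracting ``a crude a priori polynomial bound on $\|(D^2\varphi)^{-1}\|_{op}$'' from the Monge--Amp\`ere identity $\det D^2\varphi=(2\pi)^{-d/2}e^{V(\nabla\varphi)-|x|^2/2}$ and then bootstrapping --- cannot work as stated: the determinant controls only the \emph{product} of the eigenvalues of $D^2\varphi$, so a lower bound on the smallest eigenvalue (equivalently an upper bound on $\|(D^2\varphi)^{-1}\|_{op}$) requires an upper bound on the largest eigenvalue, which is exactly the quantity the whole argument is trying to estimate; the circularity you acknowledge is not broken by anything concrete in your proposal, and the differentiated identity $\nabla\log\det D^2\varphi=D^2\varphi\,\nabla V(\nabla\varphi)-x$ does not help either, since it again couples the inverse Hessian to itself. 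The paper disposes of this in one line: the hypothesis $\Hess V\le c_1\Id$ is there precisely so that the \emph{generalized Caffarelli contraction theorem applied to the inverse map} $T^{-1}=\nabla\varphi^*$, which transports $\mu=e^{-V}dx$ (with $\Hess V\le c_1\Id$) onto the Gaussian (with Hessian $\ge\Id$), gives the pointwise bound $(D^2\varphi)^{-1}=\nabla T^{-1}\circ T\le\sqrt{c_1}\,\Id$, hence $\Tr\bigl((D^2\varphi)^{-1}\bigr)\le d\sqrt{c_1}$. Once you substitute this for your bootstrap, your estimate $\mathcal L\Phi(x_0)\lesssim\sqrt{c_1}+c_1\sqrt{d+|x_0|^2}$ (resp.\ the improved one under \eqref{eqn:ass-DV}) follows directly and the rest of your outline closes. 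A minor additional caveat: your argument differentiates the Monge--Amp\`ere equation twice more, i.e.\ uses $\varphi\in C^3$; after the regularization this is legitimate, but the paper's use of second-order incremental quotients avoids the issue entirely.
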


\section{Concentration and displacement}

In this section, we shall give the proof of Theorem \ref{thm_estimate_map} in the log-concave case. The case of measures satisfying a Gaussian concentration inequality is similar, so we omit it. 

To prove the bound of Theorem \ref{thm_estimate_map}, we shall rely on the following concentration estimate, due to Lee and Vempala \cite{LV17}.

\begin{thm}[Concentration estimate for log-concave distributions, \cite{LV17}]
\label{prop_concentration}
Let $\mu$ be a log-concave and isotropic probability measure on $\R^d$. Then for any $1$-lipschitz function $f$ and $r>0$ we have 
$$\mu\left(\left\{ f \geq \int{fd\mu} + cr\right\}\right) \leq \exp\left(-\frac{cr^2}{r + \sqrt{d}}\right)$$
where $c$ is a numerical constant that does not depend on $\mu$ nor $d$.
\end{thm}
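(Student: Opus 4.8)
Since this is a quotation of the main result of \cite{LV17}, I would only sketch the strategy, and I do not expect a soft argument to reach the stated two-regime profile: the classical exponential concentration of isotropic log-concave measures gives only $\mu(|f-\int f\,d\mu|\ge t)\le Ce^{-ct/\sqrt d}$, and even the bound $\psi_d=O(d^{1/4})$ on the Kannan--Lov\'asz--Simonovits constant gives only $Ce^{-ct/d^{1/4}}$; both are weaker than $\exp(-ct^2/(t+\sqrt d))$ on part of the range $t\in(d^{1/4},\sqrt d)$. The route I would take is \emph{Eldan's stochastic localization}, which is also the one used in \cite{LV17}. Write $\mu=e^{-V}\,dx$ and introduce the measure-valued martingale $(\mu_t)_{t\ge 0}$ with $\mu_0=\mu$ and
$$d\mu_t(x)=\mu_t(x)\,\langle x-a_t,\,dB_t\rangle,\qquad a_t:=\int x\,d\mu_t(x),$$
where $(B_t)$ is a standard Brownian motion. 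Equivalently $\mu_t\propto\exp\big(\langle\theta_t,x\rangle-\tfrac{t}{2}|x|^2\big)\,d\mu(x)$ for an Itô process $\theta_t$, so each $\mu_t$ is log-concave, in fact $t$-uniformly log-concave; moreover $\E\mu_t=\mu$ for all $t$, and $\mu_t$ collapses as $t\to\infty$ to a Dirac mass at a point $X\sim\mu$. Writing $A_t:=\int(x-a_t)^{\otimes 2}\,d\mu_t$ for the covariance, Itô's formula gives $dA_t=-A_t^2\,dt+(\text{martingale})$, the martingale part being built from the third moment tensor of $\mu_t$.

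Next I would fix a $1$-Lipschitz $f$ and set $M_t:=\int f\,d\mu_t$. Then $M_t$ is a martingale with $M_0=\int f\,d\mu$ and $M_\infty=f(X)$, so that
$$f(X)-\int f\,d\mu=\int_0^\infty\langle\mathrm{Cov}_{\mu_t}(x,f),\,dB_t\rangle,\qquad d\langle M\rangle_t=|\mathrm{Cov}_{\mu_t}(x,f)|^2\,dt,$$
and the whole problem is to control $\langle M\rangle$. This is where the operator norm of $A_t$ enters: as long as $\|A_t\|_{op}$ stays bounded, the increment $d\langle M\rangle_t$ is controlled in terms of $\|A_t\|_{op}$ and the $t$-uniform log-concavity of $\mu_t$, so the quadratic variation accumulated on a short horizon $t\le T$ is small. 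I would run the process up to $T:=c_0/\sqrt d$ and let $\tau$ be the first time $\|A_t\|_{op}$ exceeds a fixed multiple of $\|A_0\|_{op}=1$. The heart of the argument is the estimate that $\mathbb{P}(\tau<T)$ is exponentially small in $\sqrt d$: one applies Itô's formula to a well-chosen potential of $A_t$ — for instance $\Tr(A_t^q)$ with $q\asymp\log d$ — and bounds its drift and quadratic-variation terms through Borell's inequality for the higher moments of the log-concave measures $\mu_t$, the factor $\sqrt d$ appearing precisely through the size of these third-tensor contributions. On the event $\{\tau\ge T\}$ one gets $\langle M\rangle_T=O(1/\sqrt d)$, and the exponential inequality for continuous martingales with bounded quadratic variation yields the Gaussian tail $\exp(-cr^2/\sqrt d)$ in the regime $r\lesssim\sqrt d$; on the complement, and for $r\gtrsim\sqrt d$, one closes the estimate with the residual quadratic variation $\langle M\rangle_\infty-\langle M\rangle_T$, again controlled through the decay of $\|A_t\|_{op}$ beyond time $T$, which produces the exponent $\exp(-cr)$. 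Combining the two regimes gives $\exp(-cr^2/(r+\sqrt d))$ up to adjusting constants, and passing between mean and median is a bounded shift.

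The main obstacle is entirely contained in that stopping-time estimate: one must keep $\|A_t\|_{op}$ — not merely $\Tr\,A_t$ — under control over a time horizon of length $\asymp 1/\sqrt d$, since a trace bound is lossy by a factor $d$ and would only reproduce the classical $e^{-ct/\sqrt d}$. That operator-norm control is the genuine contribution of \cite{LV17} and rests on the delicate high-power SDE estimate sketched above. As a remark, the older Kannan--Lov\'asz--Simonovits localization (needle decomposition) would reduce the inequality to a one-dimensional log-concave estimate but only recovers a weaker profile, so it does not suffice for the statement as written.
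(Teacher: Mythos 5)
The paper does not prove this statement: it is quoted verbatim from Lee--Vempala \cite{LV17} and used as a black box, so there is no internal proof to compare yours against. Your decision to treat it as an imported result and only sketch the strategy is therefore the right call, and your preliminary discussion is accurate --- neither the classical $e^{-ct/\sqrt d}$ concentration nor the $\psi_d = O(d^{1/4})$ KLS bound recovers the profile $\exp(-cr^2/(r+\sqrt d))$ on the whole range, so a soft argument cannot work. Your sketch of the stochastic localization route (the martingale $M_t = \int f\,d\mu_t$, quadratic variation controlled by $\|A_t\|_{op}$, horizon $T \asymp 1/\sqrt d$, Gaussian tail below $\sqrt d$ and exponential tail above) is a faithful account of how \cite{LV17} proceeds. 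One substantive correction: the potential used there to keep $\|A_t\|_{op}$ under control over a horizon of length $1/\sqrt d$ is a Stieltjes-type barrier $\Tr\,g(A_t)$ with $g$ built from $u \mapsto (u-\lambda)^{-1}$ (hence the paper's title), not $\Tr(A_t^q)$ with $q \asymp \log d$; the power-trace potential is the tool of the earlier KLS paper \cite{LV16} and only yields the weaker $d^{1/4}$-type control, which, as you yourself note, does not suffice. As a proof your proposal is of course incomplete --- the stopping-time estimate you correctly identify as the heart of the matter is asserted rather than established --- but that is unavoidable for a result of this depth cited as an external input.
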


This result is an improvement of an earlier theorem of Paouris \cite{Pao06} and Gu\'edon-Milman \cite{GM11}. A longstanding open problem, the Kannan-Lovasz-Simonovits conjecture \cite{KLS95, LV18, CG19} predicts that a better bound of the form $exp(-cr)$ holds, with $c$ universal. 

\begin{proof}[Proof of Theorem \ref{thm_estimate_map}, log-concave case]
Let $x \in \R^n$. Assume that $|T(x)| \geq 8(1 + |x|^2)$ and $|T(x)| \geq 6\sqrt{d}$ (else there is nothing to prove). This also implies that $|T(x)| \geq 8|x|$. Let $u := \frac{T(x) -x}{|T(x) - x|}$ and $B := B(x + 4\sqrt{d}u, 2\sqrt{d})$. Since $T$ is the gradient of a convex function, we know that for any $y \in B$, we have 
\begin{align*}
\langle &T(y) - T(x), y -x \rangle \geq 0 \\
&\Rightarrow T(y) \in T(x) + \{v; \exists w \in B(0,1) \text{ s.t. } \langle v, 2u + w \rangle \geq 0 \} \\
& \Rightarrow T(y) \in T(x) + \{v;  \langle v, u\rangle \geq -|v|/2 \} 
\end{align*}
Now define $f(z) := \langle z - T(x), u \rangle + |z - T(x)|/2$. This function is $3/2$-lipschitz, and the previous computation implies that $T(B) \subset \{f \geq 0 \}$. Moreover, 
\begin{align*}
\int{fd\mu} &= \int{\langle z - T(x), u \rangle + \frac{1}{2}|z - T(x)| d\mu(z)} \\
&= - \langle T(x), u \rangle + \frac{1}{2}\int{|z - T(x)|d\mu(z)} \\ 
&\leq -\frac{|T(x)|^2}{|T(x) - x|} + \frac{\langle x, T(x) \rangle }{|T(x) - x|} + \frac{|T(x)|}{2} + \frac{\sqrt{d}}{2} \\
&\leq -8|T(x)|/9 + 8|x|/7 + |T(x)|/2 + |T(x)|/12 \\
&\leq -|T(x)|(8/9 -1/7 - 1/2 -1/12) \leq -|T(x)|/8.
\end{align*}
We used the fact that $\mu$ is centered and isotropic to estimate 
$$\int{|z|d\mu} \leq \sqrt{\int{|z|^2d\mu}} = \sqrt{d}.$$

Applying Theorem \ref{prop_concentration} to the $1$-lipschitz function $2f/3$, we get
\begin{equation*} 
\mu(T(B)) \leq \mu(\{f \geq 0\}) \leq  \mu\left(\left\{f \geq \int{fd\mu} + \frac{|T(x)|}{8}\right\}\right) \leq \exp\left(-\frac{C|T(x)|^2}{|T(x)| + \sqrt{d}}\right).
\end{equation*}
Since we assumed a priori that $|T(x)| \geq 6\sqrt{d}$, we get
\begin{equation} \label{exp_conc_ball_est}
\mu(T(B)) \leq \exp(-C|T(x)|).
\end{equation}

On the other hand, 
\begin{align*}
\gamma(B) &= \frac{1}{(2\pi)^{d/2}}\int_{B}{e^{-|v|^2/2}dv} = \frac{1}{(2\pi)^{d/2}}\int_{B(0, 2\sqrt{d})}{e^{-|x + 4\sqrt{d}u + v|^2/2}dv}\\
&= \frac{e^{-|x + 4\sqrt{d}u|^2}}{(2\pi)^{d/2}}\int_{B(0, 2\sqrt{d})}{e^{-\langle x + 4\sqrt{d}u, v \rangle - |v|^2/2}dv} \\
&= \gamma(B(0, 2\sqrt{d})e^{-|x + 4\sqrt{n}u|^2/2}\left(\frac{1}{\gamma(B(0, 2\sqrt{d}))} \int_B{e^{-\langle x + 4\sqrt{d}u, v \rangle}d\gamma(v)} \right)\\
&\geq \gamma(B(0, 2\sqrt{d})e^{-|x + 4\sqrt{d}u|^2/2}\exp\left(\gamma(B(0, 2\sqrt{d})^{-1}\int_B{-\langle x + 4\sqrt{d}u, v \rangle d\gamma(v)}\right) \\
&= \gamma(B(0, 2\sqrt{d})e^{-|x + 4\sqrt{d}u|^2/2}. 
\end{align*}
Moreover $\gamma(B(0, 2\sqrt{d})) \geq 3/4$, using Markov's inequality and the fact that $\int{|x|^2d\gamma = d}$. Hence
\begin{equation} \label{gaussian_ball_est}
\gamma(B) \geq \frac{3}{4}e^{-|x + 4\sqrt{d}u|^2/2} \geq e^{-|x|^2 - 17d}. 
\end{equation}
Combining \eqref{exp_conc_ball_est}, \eqref{gaussian_ball_est} and the bound $\gamma(B) \leq \mu(T(B))$, we get
$$|T(x)| \leq C(|x|^2 + d).$$
\end{proof}

In this proof, the log-concavity is only used to derive a concentration bound. Hence we can also get similar estimates under other concentration inequalities, even without log-concavity. The two most classical families of concentration inequalities are the following: 

\begin{defn} \label{def_conc}
Let $\mu$ be a probability measure on $\R^d$. 
\begin{enumerate}
\item $\mu$ is said to satisfy an exponential concentration inequality with constant $\alpha$ if for any $1$-lipschitz function $f$ and any $r \geq 0$ we have
$$\mu\left(\left\{f \geq \int{fd\mu} + r\right\}\right) \leq \exp(-\alpha r);$$

\item $\mu$ is said to satisfy a Gaussian concentration inequality with constant $\beta$ if for any $1$-lipschitz function $f$ and any $r \geq 0$ we have
$$\mu\left(\left\{f \geq \int{fd\mu} + r\right\}\right) \leq \exp(-\beta r^2/2).$$
\end{enumerate}
\end{defn} 

Under these concentration inequalities, we can obtain the following estimates on transport maps: 

\begin{prop} \label{est_gauss_case}
Let $\mu$ be a centered measure on $\R^d$. 

If $\mu$ satisfies the exponential concentration property with constant $\alpha$, then the transport map $T$ sending $\gamma$ onto $\mu$ satisfies  $|T(x)| \leq \max\left(\frac{12}{\alpha}, 8\right)(|x|^2 + 17d)$. 

If $\mu$ satisfies the Gaussian concentration property with constant $\beta$, then the transport map $T$ sending $\gamma$ onto $\mu$ satisfies $|T(x)| \leq \max(12\beta^{-1/2}, 8)\sqrt{|x|^2 + 17d}$. 
\end{prop}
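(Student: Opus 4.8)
The plan is to re-run the proof of Theorem~\ref{thm_estimate_map} essentially verbatim, after noticing that log-concavity and isotropy of $\mu$ entered that argument only through two places: the concentration estimate of Theorem~\ref{prop_concentration}, and the first-moment bound $\int |z|\,d\mu \le \sqrt d$. The first is now simply replaced by the assumed exponential, resp.\ Gaussian, concentration inequality of Definition~\ref{def_conc}. For the second, I would apply the concentration hypothesis to each coordinate map $z \mapsto z_i$ (which is $1$-Lipschitz, with mean zero since $\mu$ is centered) and to its negative; integrating the resulting sub-exponential, resp.\ sub-Gaussian, tail of $z_i$ bounds $\int z_i^2\,d\mu$, hence by Cauchy--Schwarz $\int |z|\,d\mu \le C_\alpha \sqrt d$, resp.\ $\le C_\beta \sqrt d$, with $C_\alpha$ depending only on $\alpha$ and $C_\beta$ only on $\beta$.

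Concretely, fix $x$ and assume that $|T(x)|$ exceeds the claimed right-hand side (otherwise there is nothing to prove); this a priori bound, which now incorporates the first moment of $\mu$, is exactly what is needed to absorb the lower-order terms as in the proof of Theorem~\ref{thm_estimate_map}. Setting $u := (T(x)-x)/|T(x)-x|$, $B := B(x + 4\sqrt d\, u, 2\sqrt d)$ and $f(z) := \langle z - T(x), u\rangle + \tfrac12 |z - T(x)|$, the same computation shows that $f$ is $\tfrac32$-Lipschitz, that $T(B) \subset \{f \ge 0\}$, and that $\int f\,d\mu \le -c_0 |T(x)|$ for a numerical constant $c_0$. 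The Gaussian ball estimate $\gamma(B) \ge e^{-|x|^2 - 17d}$ of \eqref{gaussian_ball_est} and the inclusion bound $\gamma(B) \le \mu(T(B))$ are unchanged, since they do not involve the concentration of $\mu$.

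It then remains to substitute the two concentration inequalities for Theorem~\ref{prop_concentration}. Applying the exponential inequality to $\tfrac23 f$ (which is $1$-Lipschitz) at level $\tfrac23 c_0 |T(x)|$ gives $\mu(T(B)) \le \exp\!\big(-\tfrac23 \alpha c_0 |T(x)|\big)$, and combining with $\gamma(B) \le \mu(T(B))$ and \eqref{gaussian_ball_est} yields $|T(x)| \le C \alpha^{-1}(|x|^2 + d)$; applying the Gaussian inequality to $\tfrac23 f$ gives $\mu(T(B)) \le \exp\!\big(-\tfrac29 \beta c_0^2 |T(x)|^2\big)$, and the same comparison yields $|T(x)| \le C \beta^{-1/2}\sqrt{|x|^2 + d}$. (Note that here one does not even need the auxiliary inequality $|T(x)| \ge 6\sqrt d$ used in Theorem~\ref{thm_estimate_map}, since the exponents are already linear, resp.\ quadratic, in $|T(x)|$.)

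The only genuine work is constant bookkeeping: one must pick the numerical thresholds in the a priori assumption so that, carrying the displayed chain of inequalities from the proof of Theorem~\ref{thm_estimate_map} through with the $\max(\cdot,\cdot)$ cutoffs, the constants collapse to the stated $\max(12/\alpha, 8)$ and $\max(12\beta^{-1/2}, 8)$. The point to watch is that in the Gaussian case the first moment of $\mu$ is controlled only by $C\beta^{-1/2}\sqrt d$ rather than $\sqrt d$, so a $\beta^{-1/2}$ factor must be allowed into the threshold on $|T(x)|$ — which is precisely what the $8$ versus $12\beta^{-1/2}$ (and $8$ versus $12/\alpha$) shape of the final constant is designed to accommodate. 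No new idea beyond the proof of Theorem~\ref{thm_estimate_map} is required.
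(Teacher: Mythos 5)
Your proposal is correct and matches the approach the paper intends (the paper itself omits the detailed proof, stating only that the Gaussian/exponential case is "similar" to the log-concave one). You correctly isolated the two places where log-concavity and isotropy entered Theorem~\ref{thm_estimate_map} — the concentration bound and the first-moment estimate $\int|z|\,d\mu\le\sqrt d$ — and correctly observed that in Proposition~\ref{est_gauss_case}, where isotropy is dropped, the first moment must instead be recovered from the concentration hypothesis (coordinate-wise, giving $C_\alpha\sqrt d$ or $C_\beta\sqrt d$), which then forces the $\alpha$- or $\beta$-dependent factor in the a~priori threshold and in the final constant.
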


Log-concave distributions satisfy an exponential concentration property, but it is an open problem to get a dimension-free estimate \cite{KLS95, LV18}. The best general lower bound currently known is $cd^{-1/4}$ \cite{Eld13, LV16}, which for our purpose here would be worse than what we obtain using Theorem \ref{prop_concentration}. 

We note that one possible approach to establishing concentration inequalities for log-concave measures is via integrated regularity estimates for transport maps, as proved in \cite{Mil09a}. The approach here is the opposite: we take as given concentration estimates, and seek to deduce new estimates on the Brenier map.

\section{$L^{p}$ estimates on $\nabla T$}

We shall now give a proof of Theorem \ref{thm_est_reg}, which shall follow the strategy of \cite[Section 6]{Kol11}. 

To go from estimates on $T = \nabla \varphi$ to estimates on $\nabla T$, we shall use the Monge-Amp\`ere PDE satisfied by $\varphi$: 
\begin{equation}
(2\pi)^{-d/2}e^{-|x|^2/2} = e^{-V(T(x))}\det \nabla T(x)
\end{equation}
where $\mu(dx) = e^{-V(x)}dx$. 
Hence 
$$\log \det D^2\varphi = -\frac{|x|^2}{2} + V(\nabla \varphi(x)) + C.$$

\begin{proof}[Proof of Theorem \ref{thm_est_reg}]

Assume $c_{1}Id \geq \Hess V(x) \geq c_2(d + |x|)^{-1}\Id$. Note that the upper bound implies that $||(\Hess \varphi)^{-1}||_{op} \leq \sqrt{c_{1}}$ by the Caffarelli contraction theorem applied to the inverse map (in the general form proved in \cite{Kol13}).

To simplify notation, we define $\alpha$ such that $|T(x)| \leq \alpha^{-1}(d + |x|^2)$ and assume $\alpha \leq 1$. 

We introduce the notation $\partial^\eps_{ee} \varphi = \varphi(x + \eps e) + \varphi(x-\eps e) - 2\varphi(x)$ for $e \in \mathbb{S}^{n-1}$ and $\eps > 0$, which is a discretized second-order derivative of $\varphi$. 

From the Monge-Amp\`ere equation, we have
$$|x + \eps e|^2/2 - |x|^2/2 = V(T(x + \eps e)) - V(T(x)) -\log[ \det DT(x)^{-1}\det DT(x+\eps e)].$$

From the lower bound on the Hessian of $V$, we then have
\begin{align} \label{eq_convexity1}
&|x + \eps e|^2/2 - |x|^2/2 \geq \langle T(x+\eps e) - T(x), \nabla V(T(x)) \rangle \\
&+ \frac{\alpha c_2}{4}(d + |x|^2 + \eps^2)^{-1}|T(x+\eps e) - T(x)|^2 -\log[ \det DT(x)^{-1}\det DT(x+\eps e)]. \notag
\end{align}
In this bound, we used the fact that for any $y$ of norm less than $|x| + \eps $, we have $\Hess V(T(y)) \geq \frac{\alpha c_2}{4}(d + |x|^2 + \eps^2)^{-1}\Id$, due to the lower bound on $\Hess V$ and the fact that $|T(y)| \leq 2\alpha^{-1}(d + |x|^2 + \eps^2)$. 

We then multiply this by $(\partial^\eps_{ee}\varphi)^p/(d + |x|^2)^{p/2}$ with $p \geq 0$. Then we get, after a change of variable, integrating by parts and using Lemma 6.3 from \cite{Kol11}, 
\begin{align*}
&\int{\langle T(x+\eps e) - T(x), \nabla V(T(x)) \rangle \frac{(\partial^\eps_{ee}\varphi)^p}{(d + |x|^2)^{p/2}}d\gamma} \\
&= \int{\langle T(T^{-1}(x)+\eps e) - x, \nabla V(x) \rangle \frac{(\partial^\eps_{ee}\varphi)^p\circ T^{-1}}{(d + |T^{-1}(x)|^2)^{p/2}}d\mu} \\
&\geq \int{\left(\Tr(\nabla T(x+\eps e)(\nabla T)^{-1}) \circ T^{-1}-d\right)\frac{(\partial^\eps_{ee}\varphi)^p\circ T^{-1}}{(d + |T^{-1}(x)|^2)^{p/2}}d\mu} \\
&+ p \int{\langle T(T^{-1}(x)+\eps e) - x, \nabla (T^{-1})\nabla \partial^\eps_{ee} \varphi \circ T^{-1} \rangle \frac{(\partial^\eps_{ee}\varphi)^{p-1}\circ T^{-1}}{(d + |T^{-1}(x)|^2)^{p/2}}d\mu} \\
&-p \int{\langle T(T^{-1}(x)+\eps e) - x, \nabla (T^{-1})(x)T^{-1}(x) \rangle \frac{(\partial^\eps_{ee}\varphi)^p\circ T^{-1}}{(d + |T^{-1}(x)|^2)^{p/2+1}}d\mu} \\
&= \int{\left(\Tr(\nabla T(x+\eps e)(\nabla T)^{-1}) -d\right)\frac{(\partial^\eps_{ee}\varphi)^p}{(d + |x|^2)^{p/2}}d\gamma} \\
&+  p \int{\langle T(x+\eps e) - T(x), (\nabla T)^{-1}\nabla \partial^\eps_{ee} \varphi \rangle \frac{(\partial^\eps_{ee}\varphi)^{p-1}}{(d + |x|^2)^{p/2}}d\gamma} \\
&-p \int{\langle T(x+\eps e) - T(x), (\nabla T)^{-1}x \rangle \frac{(\partial^\eps_{ee}\varphi)^p}{(d + |x|^2)^{p/2+1}}d\gamma}.
\end{align*}
Arguing as in \cite[Section 6]{Kol11}, when we substitute the above inequality into \eqref{eq_convexity1} and using the fact that $\Tr(\nabla T(x+\eps e)(\nabla T)^{-1}) -d - \log \det DT(x)^{-1}\det DT(x+\eps e)$ is nonnegative we deduce that
$$\int{(|x + \eps e|^2/2 - |x|^2/2)\frac{(\partial^\eps_{ee}\varphi)^p}{(d + |x|^2)^{p/2}}d\gamma} \geq \frac{\alpha c_2}{4}\int{|T(x+\eps e) - T(x)|^2\frac{(\partial^\eps_{ee}\varphi)^p}{(d + |x|^2 + \eps^2)^{p/2+1}}d\gamma}$$
$$+ p \int{\langle T(x+\eps e) - T(x), (\nabla T)^{-1}\nabla \partial^\eps_{ee} \varphi \rangle \frac{(\partial^\eps_{ee}\varphi)^{p-1}}{(d + |x|^2)^{p/2}}d\gamma}$$
$$-p \int{\langle T(x+\eps e) - T(x), (\nabla T)^{-1}x \rangle \frac{(\partial^\eps_{ee}\varphi)^p}{(d + |x|^2)^{p/2+1}}d\gamma}.$$

We take the symmetric inequality with $-\eps e$ instead of $\eps e$, and we sum them to get
\begin{align*}
&\int{(|x + \eps e|^2/2 + |x - \eps e|^2/2 - |x|^2)\frac{(\partial^\eps_{ee}\varphi)^p}{(d + |x|^2)^{p/2}}d\gamma} \\
&\geq \frac{\alpha c_2}{4}\int{|T(x+\eps e) - T(x)|^2\frac{(\partial^\eps_{ee}\varphi)^p}{(d + |x|^2 + \eps^2)^{p/2+1}}d\gamma}\\
&+\frac{\alpha c_2}{4}\int{|T(x-\eps e) - T(x)|^2\frac{(\partial^\eps_{ee}\varphi)^p}{(d + |x|^2 + \eps^2)^{p/2+1}}d\gamma} \\
&+ p \int{\langle \nabla \partial^\eps_{ee}\varphi, (\nabla T)^{-1}\nabla \partial^\eps_{ee} \varphi \rangle \frac{(\partial^\eps_{ee}\varphi)^{p-1}}{(d + |x|^2)^{p/2}}d\gamma} \\
&-p\int{\langle \nabla \partial^\eps_{ee}\varphi, (\nabla T)^{-1}x \rangle \frac{(\partial^\eps_{ee}\varphi)^p}{(d + |x|^2)^{p/2+1}}d\gamma}.
\end{align*}

We then divide by $\eps^{2p+2}$ and pass to the limit $\eps  \longrightarrow 0$ to obtain
\begin{align*}
&\int{\frac{(\partial^2_{ee}\varphi)^p}{(n + |x|^2)^{p/2}}d\gamma} \geq \frac{\alpha c_2}{2}\int{\frac{(\partial^2_{ee}\varphi)^{p+2}}{(d + |x|^2)^{p/2+1}}d\gamma} \\
&+ p \int{\langle \nabla \partial^2_{ee}\varphi, (\Hess \varphi)^{-1}\nabla \partial^2_{ee}\varphi \rangle \frac{(\partial^2_{ee}\varphi)^{p-1}}{(d + |x|^2)^{p/2}}d\gamma} \\
&-p \int{\langle \nabla \partial^2_{ee}\varphi, (\Hess \varphi)^{-1}x \rangle \frac{(\partial^2_{ee}\varphi)^p}{(d + |x|^2)^{p/2+1}}d\gamma}. 
\end{align*}

For the last term, we have
$$p \int{\langle \nabla \partial^2_{ee}\varphi, (\Hess \varphi)^{-1}x \rangle \frac{(\partial^2_{ee}\varphi)^p}{(d + |x|^2)^{p/2+1}}d\gamma} \leq p \int{\langle \nabla \partial^2_{ee}\varphi, (\Hess \varphi)^{-1}\nabla \partial^2_{ee}\varphi \rangle \frac{(\partial^2_{ee}\varphi)^{p-1}}{(d + |x|^2)^{p/2}}d\gamma}$$
$$+ \frac{p}{4} \int{\langle x, (\Hess \varphi)^{-1}x\rangle \frac{(\partial^2_{ee}\varphi)^{p+1}}{(d + |x|^2)^{p/2+2}}d\gamma}$$
$$\leq  p \int{\langle \nabla \partial^2_{ee}\varphi, (\Hess \varphi)^{-1}\nabla \partial^2_{ee}\varphi \rangle \frac{(\partial^2_{ee}\varphi)^{p-1}}{(d + |x|^2)^{p/2}}d\gamma} + \frac{p\sqrt{c_1}}{4d^{1/2}} \int{ \frac{(\partial^2_{ee}\varphi)^{p+1}}{(d + |x|^2)^{(p+1)/2}}d\gamma}$$
where we used the upper bound on $(\Hess \varphi)^{-1}$. Therefore 
$$\frac{\alpha c_2}{2}\int{\frac{(\partial^2_{ee}\varphi)^{p+2}}{(d + |x|^2)^{p/2+1}}d\gamma}$$
$$\leq \int{\frac{(\partial^2_{ee}\varphi)^p}{(d + |x|^2)^{p/2}}d\gamma} + \frac{p\sqrt{c_1}}{4d^{1/2}} \int{ \frac{(\partial^2_{ee}\varphi)^{p+1}}{(d + |x|^2)^{(p+1)/2}}d\gamma}$$
$$\leq \left(\int{\frac{(\partial^2_{ee}\varphi)^{p+2}}{(d + |x|^2)^{p/2+1}}d\gamma}\right)^{p/(p+2)} + \frac{p\sqrt{c_1}}{4d^{1/2}}\left(\int{\frac{(\partial^2_{ee}\varphi)^{p+2}}{(d + |x|^2)^{p/2+1}}d\gamma}\right)^{(p+1)/(p+2)}.$$

From this bound we can deduce
$$\left|\left|\frac{\partial^2_{ee}\varphi}{\sqrt{d + |x|^2}}\right|\right|_{p+2} \leq \max\left(1, \frac{2}{\alpha c_2}\left(1 + p\frac{\sqrt{c_1}}{4d^{1/2}}\right)\right).$$

\end{proof}

We conclude this Section by stating the analogous result obtained under a stronger Gaussian concentration property: 

\begin{thm}
Let $\mu$ be a centered probability measure satisfying the Gaussian concentration property with constant $\beta$. Assume moreover that
$$c_1 \geq \Hess V(x) \geq \frac{c_2}{d + |x|^2}\Id.$$
Then
$$\left|\left|\frac{\partial^2_{ee}\varphi}{\sqrt{d + |x|^2}}\right|\right|_{p+2} \leq \max\left(1, \frac{C}{ c_2\sqrt{\beta}}\left(1 + p\frac{\sqrt{c_1}}{4}\right)\right).$$
\end{thm}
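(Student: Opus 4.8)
The plan is to run the argument of Theorem~\ref{thm_est_reg} essentially verbatim, the only change being that the displacement bound used as input is upgraded: instead of Theorem~\ref{thm_estimate_map} (which needs log-concavity and only gives $|T(x)|\le C(d+|x|^2)$), we invoke the Gaussian-concentration case of Proposition~\ref{est_gauss_case}, which gives the quadratically better estimate $|T(x)|\le C\beta^{-1/2}\sqrt{d+|x|^2}$. This is precisely the growth compatible with the \emph{quadratic}-in-$|x|$ lower bound $\Hess V(x)\ge c_2(d+|x|^2)^{-1}\Id$ assumed here (whereas Theorem~\ref{thm_est_reg} pairs a linear lower bound with a quadratic displacement bound). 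Plugging $|T(y)|^2\le C\beta^{-1}(d+|y|^2)\le 2C\beta^{-1}(d+|x|^2+\eps^2)$ for $|y|\le|x|+\eps$ into the hypothesis, and absorbing the stray $d$ as in Theorem~\ref{thm_est_reg}, one obtains
\[
\Hess V(T(y))\ \ge\ \frac{c_2\sqrt{\beta}}{C\,(d+|x|^2+\eps^2)}\,\Id \qquad \text{for all } |y|\le |x|+\eps ,
\]
i.e.\ the usual coercive weight $(d+|x|^2+\eps^2)^{-1}$ with a constant of order $c_2\sqrt{\beta}$; tracking how the concentration constant $\beta$ propagates into this constant is the one spot that requires genuine care.

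From here every step of Theorem~\ref{thm_est_reg} carries over, with $\alpha c_2$ replaced by a constant multiple of $c_2\sqrt{\beta}$: (i) the upper bound $\Hess V\le c_1\Id$ gives $\|(\Hess\varphi)^{-1}\|_{op}\le\sqrt{c_1}$ by Caffarelli's theorem applied to $T^{-1}$, in the form of \cite{Kol13}; (ii) writing the Monge--Amp\`ere equation as $\log\det D^2\varphi=-|x|^2/2+V(\nabla\varphi)+C$ and localizing with the second difference $\partial^\eps_{ee}\varphi$, the Hessian lower bound above along the segment $[T(x),T(x\pm\eps e)]$ yields the analogue of \eqref{eq_convexity1}; (iii) multiplying by $(\partial^\eps_{ee}\varphi)^p(d+|x|^2)^{-p/2}$, integrating against $\gamma$, changing variables through $T$, integrating by parts via Lemma~6.3 of \cite{Kol11}, and using nonnegativity of $\Tr(AB^{-1})-d-\log\det(AB^{-1})$ with $A=\nabla T(x+\eps e)$, $B=\nabla T(x)$; (iv) symmetrizing in $\pm\eps e$, dividing by $\eps^{2p+2}$ and letting $\eps\to0$; (v) handling the leftover term $p\int\langle\nabla\partial^2_{ee}\varphi,(\Hess\varphi)^{-1}x\rangle(\partial^2_{ee}\varphi)^p(d+|x|^2)^{-p/2-1}d\gamma$ by Young's inequality, its positive part being absorbed by the good gradient term produced in the integration by parts and the remainder controlled by $\langle x,(\Hess\varphi)^{-1}x\rangle\le\sqrt{c_1}|x|^2\le\sqrt{c_1}(d+|x|^2)$; (vi) closing with a H\"older interpolation between the $L^p$, $L^{p+1}$ and $L^{p+2}$ norms of $\partial^2_{ee}\varphi/\sqrt{d+|x|^2}$. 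In the last step, in contrast to Theorem~\ref{thm_est_reg}, one does not extract an extra $d^{-1/2}$ from the factor $(d+|x|^2)^{-1/2}$ in the remainder term, but simply bounds $\big(\int(d+|x|^2)^{-(p+2)/2}d\gamma\big)^{1/(p+2)}\le1$ (valid since $d\ge1$); this is why the stated $p$-term carries no $\sqrt d$, and one could keep it if desired.

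Since everything downstream of the displacement estimate is a transcription of \cite[Section~6]{Kol11}, there is no genuinely new difficulty. The points to be careful about are: (a) verifying that the quadratic hypothesis on $\Hess V$ is exactly matched to the $\sqrt{d+|x|^2}$ growth of $T$, so that the coercive term retains the weight $(d+|x|^2)^{-p/2-1}$ that makes the interpolation close; (b) the bookkeeping of the concentration constant $\beta$, which enters only through the displacement bound and hence only through the coercivity constant; and (c) the standard technical justifications in the Kolesnikov scheme (integrability of all the quantities involved, legitimacy of the integration by parts and of the limit $\eps\to0$), which one can either discharge via the a priori regularity of \cite{CEF18} together with an exhaustion by bounded domains, or handle at the level of the discrete objects $\partial^\eps_{ee}\varphi$ as in \cite{Kol11}. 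Finally, note that isotropy is not assumed here and is not needed: Gaussian concentration with constant $\beta$ already forces $\int|z|^2\,d\mu\le d/\beta$, which is the only moment information the displacement estimate uses.
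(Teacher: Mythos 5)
Your proposal reproduces the paper's intended argument exactly: the paper's proof of this theorem is the single remark that ``the proof is exactly the same'' as Theorem~\ref{thm_est_reg}, only replacing the displacement bound by the Gaussian-concentration case, and that is precisely what you carry out. The pairing of the quadratic Hessian lower bound $c_2(d+|z|^2)^{-1}$ with the square-root displacement bound, the role of Caffarelli's theorem for $T^{-1}$ under $\Hess V\le c_1\Id$, the symmetrized Kolesnikov integration-by-parts, the Young step on the cross term, the H\"older closure, and the observation that isotropy is not needed because Gaussian concentration already controls the second moment --- all of this matches the paper's scheme faithfully. So on approach and structure you are aligned with the paper.

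The one substantive point to flag is the exponent of $\beta$, and it is internal to your own write-up. You correctly record $|T(y)|^2\le C\beta^{-1}(d+|y|^2)\le 2C\beta^{-1}(d+|x|^2+\eps^2)$. Feeding this into $\Hess V(z)\ge c_2(d+|z|^2)^{-1}\Id$ yields
$$\Hess V(T(y))\ \ge\ \frac{c_2}{d+|T(y)|^2}\,\Id\ \ge\ \frac{c_2}{(1+2C\beta^{-1})(d+|x|^2+\eps^2)}\,\Id\ \ge\ \frac{c_2\,\beta}{C'(d+|x|^2+\eps^2)}\,\Id$$
for $\beta\lesssim 1$, i.e.\ a coercivity constant that is \emph{linear} in $\beta$, not $\sqrt{\beta}$ as written in your display. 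Pushing this linear constant through the interpolation gives a final prefactor $C/(c_2\beta)$, whereas you (and the paper's theorem statement) write $C/(c_2\sqrt{\beta})$. Since the paper supplies no computation here, I suspect the $\sqrt{\beta}$ in the theorem statement is a typo, or at least reflects an unstated normalization convention for $\beta$; in any case the line you wrote is not consistent with the line immediately preceding it. You should either correct the intermediate display to carry $\beta$ (and state the resulting bound $C/(c_2\beta)$), or explain where an extra factor $\sqrt{\beta}$ is gained --- it is not gained in the step you indicate.

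A cosmetic remark: your explanation of why the $p$-term lacks the $\sqrt d$ of Theorem~\ref{thm_est_reg} is a bit tangled. The $d^{-1/2}$ in that proof came from the pointwise bound $|x|^2/(d+|x|^2)^{3/2}\le d^{-1/2}$, not from an integral of $(d+|x|^2)^{-(p+2)/2}$. That pointwise bound is available here as well (it does not use isotropy), so the $d^{-1/2}$ could have been retained; the paper simply presents the slightly weaker $1+p\sqrt{c_1}/4$.
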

%{\color{red} here assumption changes with respect to above?}
The proof is exactly the same, we just have to use the stronger bound on $T$ of Theorem \ref{thm_estimate_map} when Gaussian concentration holds. 

\section{$L^\infty$ estimates on $\nabla T$}

{We now move on to the proof of Theorem \ref{thm_est_reg2}, which shall revisit the method used by Caffarelli for the original proof of the contraction theorem. }

\begin{proof}%By approximation, we can assume that the target measure is supported on an (arbitrarily large) ball $B_R$. 
Convolving the target measure with a smooth, compactly supported convolution kernel $\rho_{\delta}$, we can also assume that the target density is $C^\infty$, up to worsening the constant $c_2$ of a multiplicative factor which is arbitrarily small as $\delta \to 0$. Adding moreover $\delta|x|^2-C(V, \delta)$ to this convoluted potential (here $C(V, \delta)$ is chosen in such a way that the modified potential gives still a probability measure), we can apply Caffarelli's contraction principle to deduce a Lipschitz bound (which degenerates with $\delta$) on the optimal map from the Gaussian.
 The stability of optimal transport maps guarantees that we can reduce to prove our statement in this regularized setting. %, namely for a $V$ which is smooth on its finiteness domain and which is finite only in a large ball.
This modification of $V$ also changes the covariance matrix, and therefore the constant prefactor when applying Theorem \ref{thm_estimate_map}, but the effect shall disappear in the limit $\delta \longrightarrow 0$. 
 
Hence, we reduced our statement as follows: we have the additional assumption of the smoothness of $V_\delta$ as well as the modified bounds (in place of \eqref{eqn:hp-teo3})
\begin{equation}
\label{eqn:hp-teo3-new}
c_{1,\delta}\Id \geq \Hess V_\delta(x) \geq \frac{c_2}{d + |x|}\Id+ \delta \Id,
\end{equation}
where $c_{1,\delta} = c_1+\delta$. 
We aim at proving that there exists a universal constant $C>0$ (independent of $\delta$) such that
\begin{equation} \label{eq_bnd_deriv-bad-first}
\partial_{ee} \varphi (x) \leq C \frac{c_{1,\delta}^2 }{c_2^2}(d + |x|^2)^2 \qquad \mbox{for every } e \in \R^d, \; |e|=1
\end{equation}
and that, if we assume that $c_{1,\delta} + 2\delta |x| \geq |DV_\delta(x)|$,
then
\begin{equation} \label{eq_bnd_deriv-bad}
\partial_{ee} \varphi (x) \leq C \frac{c_{1,\delta}^2 }{c_2^2}(d^{4/3} + |x|^2) \qquad \mbox{for every } e \in \R^d, \; |e|=1. 
\end{equation}
With a slight abuse of notation, we will not explicitly denote the dependence of $V_\delta$ and $\varphi$ on $\delta$, but on the other hand $\delta$ can be considered as fixed for the reminder of the proof.

For any function $f: \R^d \to \R$ we consider the (second derivative type) incremental quotient
$$
\partial^\eps_{ee} f(x) = f(x+\eps e)+f(x-\eps e)-2f(x)
$$
%We recall that by \cite%[Lemma ...]
%{Caf00} (see also \cite[Lemma 3.1]{CFJ}) we have
%$$
%\lim_{|x|\to \infty} \partial^\eps_{\alpha\alpha} \varphi(x) =0.
%$$
%Let $0<q,s $ to be fixed at the end of the proof and let $x_0$ be a maximum point of
%$$x_0 \in {\rm argmax}\Big\{ \frac{\partial^\eps_{\alpha\alpha} \varphi(x)}{(d^{s}+|x|^2)^{\frac q2}} \Big\}
%$$
%Since by Caffarelli's theorem \cite{Caf00} the map $\varphi$ has bounded second derivatives (with a bound that degenerates with $\delta \to 0$), we know that the previous maximum point is in fact achieved.
Let $1<q,s $ to be fixed at the end of the proof and let $(x_0, \alpha)$ be a maximum point of
$$(x_0 , \alpha) \in {\rm argmax}\Big\{ \frac{\partial^\eps_{ee} \varphi(x)}{(d^{s}+|x|^2)^{\frac q2}} : e \in \R^d , \: |e|=1, \; x_0\in \R^d \Big\}
$$
Since by Caffarelli's theorem \cite{Caf00} the map $\varphi$ has second derivatives bounded by $\delta^{-1/2}$, the previous maximum point is reached at some point $x_0$. Without loss of generality, we may assume that 
$$\max \big\{ {\partial^\eps_{ee} \varphi(x)}{(d^{s}+|x|^2)^{\frac {-q}2}} : e \in \R^d , \: |e|=1, \; x_0\in \R^d \big\} \geq \epsilon^{-2},$$
 since otherwise there would be noting to prove. In this situation, we observe that 
%$$\max\{ {\partial^\eps_{\beta \beta} \varphi(0)}{d^{-s\frac q2}}: \beta \in \R^d , \: |\beta|=1\} \geq \bar C(V, d).$$ 
% Indeed, $$\max\{ {\partial^\eps_{\beta \beta} \varphi(0)}{d^{-s\frac q2}}: \beta \in \R^d , \: |\beta|=1\}  \geq \max\{ {\partial_{\beta \beta} \varphi(x)}{d^{-s\frac q2}}: \beta \in \R^d , \: |\beta|=1,\; |x|<1\}$$
% and the last term is estimated using the Monge-Ampere equation in $B_1$ as well as an upper bound on $V(\varphi(x)) \leq V (C(d+1))$.
%
%Hence, we deduce that 
at the maximum point $x_0$, $|x_0|$ cannot be too large. More precisely, we have
\begin{equation}
\label{eqn:ubx0} |x_0|^q \leq \delta^{-1/2}.
\end{equation}
Indeed, when $ |x|^q \geq \delta^{-1/2}$ we have that ${\partial^\eps_{ee} \varphi(x)}{(d^{s}+|x|^2)^{\frac {-q}2}} \leq \epsilon^{-2}$, hence such a point $x$ cannot be a maximum.

To shorten notation, in the sequel $\nabla^2 f$ stands for the Hessian of the function $f$. 

At the maximum we have that
$$0= \nabla\Big( \frac{\partial^\eps_{\alpha\alpha} \varphi(x_0)}{(d^{s}+|x_0|^2)^{\frac q2}}  \Big)
=
\frac{\nabla\partial^\eps_{\alpha\alpha} \varphi(x_0)}{(d^{s}+|x_0|^2)^{\frac q2}}
-q \frac{\partial^\eps_{\alpha\alpha} \varphi(x_0) x_0}{(d^{s}+|x_0|^2)^{\frac {q+2}2}},
$$
so that
\begin{equation}
\label{eqn:x0opt1}
\nabla\partial^\eps_{\alpha\alpha} \varphi(x_0) =q \frac{\partial^\eps_{\alpha\alpha} \varphi(x_0) x_0}{(d^{s}+|x_0|^2)}. 
\end{equation}
Moreover, 
\begin{equation*}
\begin{split}
0 \geq  \nabla^2\Big( \frac{\partial^\eps_{\alpha\alpha} \varphi(x_0)}{(d^{s}+|x_0|^2)^{\frac q2}}  \Big)&= \frac{\nabla^2\partial^\eps_{\alpha\alpha} \varphi(x_0)}{(d^{s}+|x_0|^2)^{\frac q2}}- 2q \frac{\nabla\partial^\eps_{\alpha\alpha} \varphi(x_0)\otimes x_0}{(d^{s}+|x_0|^2)^{\frac{q+2}{2}}}
\\&
-q \frac{\partial^\eps_{\alpha\alpha} \varphi(x_0) Id}{(d^{s}+|x_0|^2)^{\frac {q+2}2}} + q(q+2) \frac{\partial^\eps_{\alpha\alpha} \varphi(x_0) x_0 \otimes x_0}{(d^{s}+|x_0|^2)^{\frac {q+4}2}}
\end{split}
\end{equation*}
Using \eqref{eqn:x0opt1} on the second and fourth term in the right-hand side, we can rewrite this bound as 
\begin{equation}
\label{eqn:x0opt2}
\nabla^2\partial^\eps_{\alpha\alpha} \varphi(x_0) \leq q
\frac{\partial^\eps_{\alpha\alpha} \varphi(x_0) Id}{(d^{s}+|x_0|^2)} + q(q-2) \frac{\partial^\eps_{\alpha\alpha} \varphi(x_0) x_0 \otimes x_0}{(d^{s}+|x_0|^2)^{2}}.
\end{equation}
%Since ${\partial^\eps_{\alpha\alpha} \varphi(x_0) x_0 \otimes x_0}{(d+|x_0|^2)^{-2}} \geq 0$, the last term can be ignored for $q\leq 2$. {\color{red} Max : Since we choose q=4 in one of the cases, we wrote the computations with that extra term, so I think this sentence should be dropped}

We then take the logarithm of the Monge-Amp\`ere equation 
\begin{equation}
\label{eqn:log-MA}
\log \det \nabla^2\varphi = -\frac{|x|^2}{2} + V_\delta(\nabla \varphi(x)) + C_0.
\end{equation}
%and differentiating it twice in direction $\alpha$, we get
%$$
%{\rm tr}((D^2 \varphi)^{-1} D^2 \partial_\alpha\varphi)= -x\cdot \alpha+ DV(D \varphi) D\partial_\alpha \varphi$$
%and
%\begin{equation}
%\begin{split}
%- {\rm tr}([(D^2 \varphi)^{-1} D^2\partial_\alpha\varphi][(D^2 \varphi)^{-1} D^2 \partial_\alpha\varphi]^T) +{\rm tr}((D^2 \varphi)^{-1} D^2 \partial_{\alpha\alpha}\varphi)
% \\= -1+ (D\partial_\alpha \varphi)^T D^2V(D \varphi) D\partial_\alpha \varphi + DV(D \varphi) D\partial_{\alpha \alpha} \varphi 
%\end{split}
%\end{equation}
Since the map $A \mapsto \log \det A=:F(A)$ is concave, and it holds $\lim_{t\to0}( \det(A+t B) - \det A) /t = {\rm tr} (A^{-1}B)$, we have
$$F(\nabla^2\varphi(x_0\pm \eps \alpha)) = F(\nabla^2\varphi(x_0))+ {\rm tr} [(\nabla^2\varphi(x_0))^{-1}((\nabla^2\varphi(x_0\pm \eps \alpha))- (\nabla^2\varphi(x_0)))].$$
From the previous inequality and \eqref{eqn:x0opt2} we deduce that
$$\partial^\eps_{\alpha \alpha}[F(\nabla^2\varphi) ](x_0) \leq  q
\frac{\partial^\eps_{\alpha\alpha} \varphi(x_0)}{(d^{s}+|x_0|^2) } {\rm tr}\left((\nabla^2 \varphi)^{-1}+(q-2)(\nabla^2 \varphi)^{-1}x_0 \otimes x_0 (d+|x_0|^2)^{-1}\right).
$$
Using \eqref{eqn:log-MA}, the previous inequality can be rewritten as
\begin{equation}
\label{MA-effective}
 \partial^\eps_{\alpha\alpha} [ V_\delta(\nabla\varphi)] (x_0)\leq \epsilon^2 +q
\frac{\partial^\eps_{\alpha\alpha} \varphi(x_0)}{(d^{s}+|x_0|^2) } {\rm tr}\left((\nabla^2 \varphi)^{-1}+(q-2)(\nabla^2 \varphi)^{-1}x_0 \otimes x_0 (d+|x_0|^2)^{-1}\right).
\end{equation}
We define $v:= \nabla\varphi(x_0+\eps \alpha)- \nabla \varphi(x_0)$ if $\max\{ | \nabla\varphi(x_0+\eps \alpha)- \nabla \varphi(x_0)| , | \nabla\varphi(x_0-\eps \alpha)- \nabla \varphi(x_0)|\} = | \nabla\varphi(x_0+\eps \alpha)- \nabla \varphi(x_0)|$ and $v:= \nabla \varphi(x_0)-\nabla\varphi(x_0-\eps \alpha)$ otherwise. Since the computations are essentially the same in both cases, we assume that we are in the first situation.
We observe that since $\varphi$ is convex
\begin{equation*}
\begin{split}
|\partial^\eps_{\alpha\alpha} \varphi(x_0)| &\leq \eps|\nabla\varphi(x_0+\eps \alpha)- \nabla \varphi(x_0-\eps\alpha)|
\\& \leq \eps| \nabla\varphi(x_0+\eps \alpha)- \nabla \varphi(x_0)| + \eps| \nabla\varphi(x_0+\eps \alpha)- \nabla \varphi(x_0)| \leq 2\eps|v|.
\end{split}
\end{equation*}

We rewrite the left-hand side of \eqref{MA-effective} as 
\begin{equation}
\begin{split}
\partial^\eps_{\alpha\alpha} [ V_\delta(\nabla\varphi)] (x_0) = &V_\delta( \nabla \varphi(x_0)+ v) + V_\delta( \nabla \varphi(x_0)- v) -2 V_\delta( \nabla \varphi(x_0)) 
\\&-V_\delta( 2\nabla \varphi(x_0)- \nabla\varphi(x_0+\eps \alpha)) + V_\delta(\nabla \varphi(x_0-\eps \alpha))
\end{split}
\end{equation}
For the first three terms, we have
\begin{equation}
\begin{split}
V_\delta( \nabla &\varphi(x_0)+ v) + V_\delta( \nabla \varphi(x_0)- v) -2 V_\delta( \nabla \varphi(x_0))  
\\&\geq \inf\{ v^T\nabla^2 V_\delta(z)v: {|z| \leq \max\{ | \nabla \varphi(x_0)| + | \nabla \varphi(x_0+\eps \alpha)|+ | \nabla \varphi(x_0+\eps \alpha)|} \}
\\&\geq \inf\{ v^T\nabla^2 V_\delta(z)v: {|z| \leq C((1+|x_0|)^2+d)} \}
\\&\geq \inf\{ v^T\nabla^2 V_\delta(z)v: {|z| \leq 2 C(|x_0|^2+d)} \}
\\& \geq \frac{c_2}{3C(|x_0|^2+d)}|v|^2 \geq \frac{c_2}{12C(|x_0|^2+d)}\eps^{-2}|\partial^\eps_{\alpha\alpha} \varphi(x_0)|^2.
\end{split}
\end{equation}
For the last two terms we employ \eqref{eqn:x0opt1} to obtain
\begin{equation}\label{lasttwo}
\begin{split}
&\left|-V_\delta( 2\nabla \varphi(x_0)- \nabla\varphi(x_0+\eps \alpha)) + V_\delta(\nabla \varphi(x_0-\eps \alpha))\right|
\\&
\leq  \left(\sup\{ |\nabla V_\delta(z)|: {|z| \leq | \nabla \varphi(x_0)|+  | \nabla \varphi(x_0+\eps \alpha)|+ | \nabla \varphi(x_0+\eps \alpha)|} \}\right) \times q \frac{\partial^\eps_{\alpha\alpha} \varphi(x_0) |x_0|}{(d^{s}+|x_0|^2)}
\\&
\leq  \left(\sup\{ |\nabla V_\delta(z)|: {|z| \leq 6C(|x_0|^2+d)} \}\right) \times q \frac{\partial^\eps_{\alpha\alpha} \varphi(x_0)| x_0|}{(d^{s}+|x_0|^2)}.
\end{split}
\end{equation}

Since the map $(\nabla \varphi)^{-1}$ is the optimal transport between $e^{-V_\delta}$ and the Gaussian, thanks to Caffarelli's contraction theorem we deduce that each eigenvalue of the matrix $\nabla(\nabla \varphi)^{-1}=(\nabla^2 \varphi)^{-1}$ is bounded from above by $ c_{1,\delta}^{-1/2}$. Hence, recalling that $q > 1$, the second term in the right-hand side of \eqref{MA-effective} is estimated from above by
\begin{align*}
q\frac{\partial^\eps_{\alpha\alpha} \varphi(x_0)}{(d^{s}+|x_0|^2) } &{\rm tr}((\nabla^2 \varphi)^{-1}+(q-2)(\nabla^2 \varphi)^{-1}x_0 \otimes x_0 (d+|x_0|^2)^{-1}) \\
&\leq q c_{1,\delta}^{1/2} (d+q-2) \frac{\partial^\eps_{\alpha\alpha} \varphi(x_0)}{(d^{s}+|x_0|^2) } 
\\
&\leq q^2 c_{1,\delta}^{1/2} (d+q-2) \frac{\partial^\eps_{\alpha\alpha} \varphi(x_0)}{(d^{s}+|x_0|^2) }\\
&\leq \frac{c_2|\partial^\eps_{\alpha\alpha} \varphi|^2 }{48C\eps^2  (d+|x_0|^2 )} + \frac{12 C\eps^2q^4 c_{1,\delta} d^2 (d+|x_0|^2 ) }{c_2 (d^{s}+|x_0|^2 )^2}
\end{align*}
As regards \eqref{lasttwo}, under the assumption that  $c_{1,\delta} + 2\delta |x| \geq |\nabla V_\delta(x)|$ and thanks to \eqref{eqn:ubx0} we have 
\begin{equation*}
\begin{split}
\sup\{ |\nabla V_\delta(z)|: {|z| \leq 6C(|x_0|^2+d)} \} & \leq c_{1,\delta}+12C \delta (|x_0|^2+d) 
\\&\leq c_{1,\delta}+12C \delta (\delta^{-1/q}+d) = :\bar c_{1, \delta}.
\end{split}
\end{equation*}
We observe that $\bar c_{1, \delta} \to c_1$ as $\delta\to0$. Hence we estimate
\begin{equation}
\begin{split}
\sup\{ |\nabla V_\delta(z)|: {|z| \leq 6C(|x_0|^2+d)} \} &q \frac{\partial^\eps_{\alpha\alpha} \varphi(x_0)| x_0|}{(d^{s}+|x_0|^2)}
 \leq\bar  c_{1,\delta}q |x_0| \frac{\partial^\eps_{\alpha\alpha} \varphi(x_0) }{(d^{s}+|x_0|^2)} 
\\&\leq  \frac{c_2|\partial^\eps_{\alpha\alpha} \varphi|^2 }{%1+
48 C\eps^2 (d+|x_0|^2 )} + \frac{12 C\eps^2c_{1,\delta}^2q^2 |x_0|^2(d+|x|^2 )  }{c_2 (d^{s}+|x_0|^2 )^2}
\end{split}
\end{equation}
Putting together the previous estimates starting from \eqref{MA-effective}, under the assumption \eqref{eqn:ass-DV}  we find that
\begin{equation}
\label{eqn:underest}
 \frac{c_2|\partial^\eps_{\alpha\alpha} \varphi(x_0)|^2 }{%1+
 C\eps^2 (d+|x_0|^2 )} \leq \eps^2+48 C\eps^2q^4 \frac{(c_{1,\delta} d^2+c_{1,\delta}^2 |x_0|^2) (d+|x_0|^2 ) }{c_{2,\delta} (d^{s}+|x_0|^2 )^2}
 \leq  \tilde C \eps^2 q^4 \frac{c_{1,\delta}^2(d^{s}+|x_0|^2 ) }{c_2(d+|x_0|^2 )},
\end{equation}
where $\tilde C$ is a numerical constant, that could be made explicit.
This provides an upper bound on 
$$\max_{x\in \R^d} \frac{c_2|\partial^\eps_{\alpha\alpha} \varphi (x)|^2 }{%1+
 C (d^{s}+|x|^2 )}
 =\frac{c_2|\partial^\eps_{\alpha\alpha} \varphi (x_0)|^2 }{%1+
 C (d^{s}+|x_0|^2 )}
 \leq \tilde C \eps^4 q^4 \frac{c_{1,\delta}^2 }{c_{2,\delta}}
 $$
 by choosing $q=2$ and $s=4/3$ and proves \eqref{eq_bnd_deriv-bad-first} (after letting $\delta$ and then $\eps$ go to zero).
 
 If we don't require assumption \eqref{eqn:ass-DV}, instead, we know from the upper bound on the Hessian that 
 \begin{equation}
 \label{eqn:ub-DV}|\nabla V_\delta(x)| \leq \bar  c_{1,\delta} (\sqrt d+ |x|).
 \end{equation}
 Indeed, we just have to prove that $|\nabla V_\delta(0)| \leq\bar  c_{1,\delta}\sqrt{d}$ and use the upper bound in \eqref{eqn:hp-teo3}. By integration by parts we have
$\int{\nabla V_\delta(x)e^{-V_\delta}dx} = 0$ and by Jensen's inequality and the bound on the second moment of $\mu$
\begin{align*}|\nabla V_\delta(0)| &= |\nabla V_\delta(0) - \int{\nabla V_\delta(x)d\mu}| \\
&\leq \int{|\nabla V_\delta(0)-\nabla V_\delta(x)|d\mu}
\leq\bar  c_{1,\delta}\int{|x|\mu} \leq\bar  c_{1,\delta}(1 + o(\delta))\sqrt{d} ,
\end{align*}
where the $o(\delta)$ comes from the fact that the covariance matrix has been modified when replacing $V$ by $V_{\delta}$, but it converges to the identity matrix as $\delta$ goes to zero. This estimate then implies \eqref{eqn:ub-DV}, up to slightly modifying $\bar  c_{1,\delta}$ in a way that it still converges to $c_1$ in the limit $\delta \longrightarrow 0$.
Hence 
\begin{align*}
\sup&\{ |\nabla V_\delta(z)|: {|z| \leq 6C(|x_0|^2+d)} \} q \frac{\partial^\eps_{\alpha\alpha} \varphi(x_0)| x_0|}{(d^{s}+|x_0|^2)}\\
&\leq 6q\bar  c_{1,\delta} |x_0| (d+|x_0|^2)\frac{\partial^\eps_{\alpha\alpha} \varphi(x_0) }{(d^{s}+|x_0|^2)} 
\\&\leq  \frac{c_{2,\delta}|\partial^\eps_{\alpha\alpha} \varphi|^2 }{%1+
48 C\eps^2 (d+|x_0|^2 )} + \frac{108 C\eps^2\bar  c_{1,\delta}^2q^2 |x_0|^2(d+|x_0|^2 )^2  }{c_{2,\delta} (d^{s}+|x_0|^2 )^2}
\end{align*}
In this case \eqref{eqn:underest} becomes
 \begin{equation}
\label{eqn:underest2}
 \frac{c_{2,\delta}|\partial^\eps_{\alpha\alpha} \varphi(x_0)|^2 }{%1+
 C\eps^2 (d+|x_0|^2 )} \leq \eps^2+4 C\eps^2q^4 \frac{c_{1,\delta}^2 (d+|x_0|^2 )^3}{c_{2,\delta} (d^{s}+|x_0|^2 )^2}
 \leq  \tilde C q^4 \frac{c_{1,\delta}^2(d^{s}+|x_0|^2 ) ^2}{c_{2,\delta}(d+|x_0|^2 )},
\end{equation}
and choosing $q=4$ and $s=1$ concludes the proof of \eqref{eq_bnd_deriv-bad}.
\end{proof}

\textbf{\underline{Acknowledgments}}: M.C. was supported by the SNF Grant 182565 and by the NSF under Grant No. DMS-1638352.
 M.F. was supported by the Projects MESA (ANR-18-CE40-006) and EFI (ANR-17-CE40-0030) of the French National Research Agency (ANR), and ANR-11-LABX-0040-CIMI within the program ANR-11-IDEX-0002-02. Part of this work was done during the Labex CIMI semester program on calculus of variations and probability in the spring 2019.


\begin{thebibliography}{99}
%\bibitem{BL97} S. Bobkov and M. Ledoux, Poincar\'e’s inequalities and Talagrand's concentration phenomenon for the exponential distribution. \textit{Probab. Theor. Relat. Fields} 107, 383-400 (1997). 

\bibitem{Bre91} Y. Brenier, Polar factorization and monotone rearrangement of vector-valued functions, \textit{Comm. Pure Appl. Math.} 
44, 4, 375--417 (1991). 

\bibitem{Caf00} L.A. Caffarelli, Monotonicity Properties of Optimal Transportation and the {FKG} and Related Inequalities, \textit{Communications in Mathematical Physics} 214, 3, 547--563 (2000). 

\bibitem{CG19} P. Cattiaux, A.Guillin.
On the Poincar\'e constant of log-concave measures.
To appear in Geometric Aspects of Functional Analysis: Israel Seminar (GAFA). 2019. 

\bibitem{CFJ} M. Colombo, Y. Jhaveri and A. Figalli, Lipschitz changes of variables between perturbations of log-concave measures 
\textit{Ann. Sc. Norm. Super. Pisa Cl. Sci.}, 17 (2017), no. 4, 1491-1519.

\bibitem{CE02} D. Cordero-Erausquin, Some applications of mass transport to Gaussian type inequalities, \textit{Arch. Rational Mech. Anal.} 161 (2002), 257-269. 

\bibitem{CEF18} D. Cordero-Erausquin and A. Figalli, Regularity of monotone maps between unbounded domains. \textit{Discrete Contin. Dyn. Syst., to appear}, 2018. 

\bibitem{CEFM04} D. Cordero-Erausquin, M. Fradelizi and B. Maurey, The (B)-conjecture for the Gaussian measure of dilates of symmetric convex sets and related problems, \textit{J. Funct. Anal.} 214 (2004), 410-427. .

\bibitem{CFP} T. Courtade, M. Fathi and A. Pananjady, Quantitative stability of the entropy power inequality. \textit{IEEE Trans. Inform. Theory} 64 (2018), no. 8, 5691--5703.

\bibitem{Eld13} R. Eldan, Thin shell implies spectral gap via a stochastic localization scheme. \textit{Geom. Funct. Anal.} April 2013, Volume 23, Issue 2, pp 532-569 (2013). 

\bibitem{FGP} M. Fathi, N. Gozlan and M. Prod'homme, A proof of the Caffarelli contraction theorem via entropic regularization . \textit{Arxiv preprint}, 2019.

\bibitem{Fig17} A. Figalli, The Monge-Ampère Equation and Its Applications, Z\"urich Lectures in Advanced Mathematics. European Mathematical Society (EMS), Z\"urich, 2017. x+200

\bibitem{GM11} O. Guedon and E. Milman. Interpolating thin-shell and sharp large-deviation estimates for isotropic log-concave measures. \textit{Geometric and Functional Analysis}, 21(5):1043--1068, 2011.

\bibitem{KLS95} R. Kannan, L. Lov\'asz and M. Simonovits, Isoperimetric problems for convex bodies and a localization lemma.  \textit{Discrete Comput. Geom.} 13 (1995), no. 3--4, 541--559.

\bibitem{KK15} B. Klartag and A. Kolesnikov, Eigenvalue distribution of optimal transportation. \textit{Anal. PDE} 8 (2015), no. 1, 33--55. 

\bibitem{KM12} Y.-H. Kim and E. Milman, A Generalization of Caffarelli’s Contraction Theorem via (reverse) Heat Flow, \textit{Math. Annal.} 354 (3), 827-862, 2012.

\bibitem{Kol11} A. Kolesnikov, Mass transportation and contractions. \textit{Arxiv preprint} (2011). 

\bibitem{Kol13} Kolesnikov A. On Sobolev Regularity of Mass Transport and Transportation Inequalities. \textit{Theory of Probability and Its Applications}. 2013. Vol. 57. No. 2. P. 243-264.


\bibitem{LV16} Y.T. Lee and S. Vempala, Eldan's Stochastic Localization and the KLS Hyperplane Conjecture: An Improved Lower Bound for Expansion. \textit{FOCS 2017}

\bibitem{LV17}  Y.T. Lee and S. Vempala, Stochastic Localization + Stieltjes Barrier = Tight Bound for Log-Sobolev. \textit{STOC 2018}. 

\bibitem{LV18} Y.T. Lee and S. Vempala, The Kannan-Lovász-Simonovits Conjecture. \textit{Arxiv preprint}, 2018. 

\bibitem{Mil09a} E. Milman, On the role of convexity in isoperimetry, spectral gap and concentration,  \textit{Invent. Math.} 177 (1), 1--43, 2009.

\bibitem{Mil18} E. Milman, Spectral Estimates, Contractions and Hypercontractivity, \textit{J. Spectr. Theory} 8 (2), 669–714, 2018.

\bibitem{Pao06} G. Paouris, Concentration of mass on convex bodies. \textit{Geometric and Functional Analysis}, 16:1021--1049, 2006.

\end{thebibliography}
\end{document}